\theoremstyle{plain}
\newtheorem{thm}{Theorem}[section]
\newtheorem{lem}[thm]{Lemma}
\newtheorem{cor}[thm]{Corollary}
\newtheorem{prop}[thm]{Proposition}
\theoremstyle{definition}
\newtheorem{defi}[thm]{Definition}
\title{\bf{On linear groups over weakly locally finite division rings\footnote{The first and second authors were supported by a grant NAFOSTED (Vietnam). The third author was supported by Vietnam National University (VNU-HCMC) under grant number B2012-18-31} }}
\author{Bui Xuan Hai\footnote{Corresponding author, Faculty of Mathematics and Computer Science, University of Science, VNU-HCMC, 227 Nguyen Van Cu Str., Dist. 5, HCM-City, Vietnam,  e-mail: bxhai@hcmus.edu.vn},
Mai Hoang Bien\footnote{Department of Basic Sciences, University of Architecture, 196 Pasteur Str., Dist. 1, HCM-City, Vietnam, e-mail:  maihoangbien012@yahoo.com}, and
Trinh Thanh Deo\footnote{Faculty of Mathematics and Computer Science, University of Science, VNU-HCMC, 227 Nguyen Van Cu Str., Dist. 5, HCM-City, Vietnam,  e-mail: ttdeo@hcmus.edu.vn}} 
\begin{document}
\baselineskip=18pt
\maketitle
\def\Q{\mathbb{Q}}\def\F{\mathbb{F}}
\newcommand{\hpt}[2]{\left\{\begin{array}{#1} #2\end{array}\right.}
\begin{abstract}
In this paper,  we give the definition of {\em weakly locally finite} division rings and we show that the class of these rings  strictly contains the class of locally finite division rings.  Further, we study multiplicative subgroups  in these rings. Some skew linear groups are also considered. Our  new obtained results generalize previous results for centrally finite case.
\end{abstract}

{\bf {\em Key words:}}  division ring; centrally finite; locally finite; weakly locally  finite; linear groups. 

{\bf{\em  Mathematics Subject Classification 2010}}: 16K20, 16K40 

\newpage
\section{Introduction}

Let $D$ be a division ring with center  $F$. Recall that $D$ is {\em centrally finite} if $D$ is a finite dimensional vector space over $F$; $D$ is {\em locally  finite} if for every finite subset $S$ of $D$, the division subring $F(S)$  of $D$ generated by $S\cup F$ is a finite dimensional vector space over $F$. If $a$ is an element from $D$, then we have the field extension $F\subseteq F(a)$. Obviously, $a$ is {\em algebraic} over $F$ if and only if this extension is finite. We say that a non-empty subset $S$ of $D$ is {\em algebraic} over $F$ if every element of $S$ is algebraic over $F$. A division ring $D$ is  {\em algebraic} over center $F$ (briefly, $D$ is {\em algebraic}), if every element of $D$ is algebraic over $F$. Clearly, a locally  finite division ring is algebraic. It was conjectured that any algebraic division ring is locally  finite (this is known as the Kurosch's Problem for division rings \cite{Kha}). However, this problem remains still open in general. There exist locally  finite division rings which are not centrally finite. In this paper, we define a {\em weakly locally  finite} division ring as a division ring in which for every finite subset $S$, the division subring generated by $S$ is centrally finite. It can be shown that every locally finite division ring is weakly locally finite. The converse is not true. Moreover, in the text, we give one example of weakly locally finite division ring which is not even algebraic. Next, we study subgroups in weakly locally  finite division rings. In particular, we give the affirmative  answer to one of Herstein's conjectures \cite{her} for these rings. Some linear groups  are also investigated.  Our new obtained results generalize previous results for centrally finite case. The symbols and notation we use in this paper are standard and they should be found in the literature on subgroups in division rings and on skew linear groups. In particular, for a division ring $D$ we denote by $D^*$ and $D'$ the multiplicative group and the derived group of $D$ respectively.

 \section{Definitions and examples}

\begin{defi}\label{def:2.1} 
We say that a division ring $D$  is  {\em weakly locally  finite } if  for every finite subset $S$ of $D$, the division subring  generated by $S$ in $D$ is  centrally finite.
\end{defi}
It can be shown that every division subring of a centrally finite division ring is itself centrally finite. Using this fact, it is easy to see that every locally finite division ring is weakly locally finite. 

%----------------------------------------
Our purpose in this section is to construct an example showing the difference between  the class of locally finite division rings and the class of weakly locally finite division rings. In order to do so, following the general Mal'cev-Neumann construction of Laurent series rings, we construct a Laurent series ring with a base ring which is an extension of the field $\mathbb{Q}$ of rational numbers. The ring we construct in the following proposition is weakly locally finite but it is not even algebraic. 

\begin{prop}
There exists a weakly locally finite division ring which is not algebraic.
\end{prop}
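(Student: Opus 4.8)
The plan is to realize $D$ as a directed union $D=\bigcup_{n\ge 1}D_n$ of \emph{centrally finite} division rings: weak local finiteness then comes essentially for free, while a careful choice of the $D_n$ forces $D$ to contain an element transcendental over its center. The building blocks are skew Laurent series division rings (the case of the group $\mathbb{Z}$ in the Mal'cev--Neumann construction) over an increasing tower of cyclotomic fields.

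In detail, fix an odd prime $p$, put $K_n=\mathbb{Q}(\zeta_{p^n})$ and $K=\bigcup_{n\ge 1}K_n$, and choose $\sigma\in\mathrm{Gal}(K/\mathbb{Q})$ restricting to a generator of $\mathrm{Gal}(K_n/\mathbb{Q})\cong(\mathbb{Z}/p^n)^{\times}$ for every $n$ (possible since these groups are cyclic and the restriction maps surjective). Then $\sigma$ stabilizes each $K_n$, and $\sigma_n:=\sigma|_{K_n}$ has finite order $m_n=(p-1)p^{n-1}$ with $m_n\mid m_{n+1}$ and $m_n\to\infty$. Form the skew Laurent series ring $D_n=K_n((x;\sigma_n))$ and set $D=\bigcup_{n\ge 1}D_n$, regarded inside $K((x;\sigma))$ as those series all of whose coefficients lie in a single $K_n$. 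First I would record two routine facts: $D$ is a division ring, being a union of the chain of division rings $D_n$ (the inclusions are compatible since $\sigma_{n+1}|_{K_n}=\sigma_n$); and each $D_n$ is centrally finite, its center being $\mathbb{Q}((x^{m_n}))$ with $[D_n:Z(D_n)]=m_n^2<\infty$ (indeed $D_n$ is a cyclic algebra of degree $m_n$ over this field).

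Granting this, weak local finiteness is immediate: a finite subset $S\subseteq D$ lies in some $D_N$ by directedness, so the division subring it generates is a division subring of $D_N$, hence centrally finite by the fact recalled in the excerpt that division subrings of centrally finite division rings are again centrally finite. For non-algebraicity I would compute the center: an element of $Z(D)$ lies in $Z(D_n)=\mathbb{Q}((x^{m_n}))$ for all large $n$, so its support in $x$ is contained in $\bigcap_n m_n\mathbb{Z}=\{0\}$ since $m_n\to\infty$; hence $Z(D)=\mathbb{Q}$. Since a relation $c_0+c_1x+\dots+c_kx^k=0$ with $c_i\in\mathbb{Q}$ forces every $c_i=0$ in $K((x;\sigma))$, the element $x\in D$ is transcendental over $Z(D)=\mathbb{Q}$, so $D$ is not algebraic (in particular not locally finite), which is what distinguishes the two classes.

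The point demanding care is the tension between the two requirements on $\sigma$: it must have \emph{infinite} order on $K$ — this is what makes $x$ transcendental over the center — yet restrict to a \emph{finite}-order automorphism on each finite subextension $K_n$ — this is what keeps each $D_n$ centrally finite. These demands are incompatible over a finitely generated base field, which is exactly why one must work over the infinitely generated field $\bigcup_n\mathbb{Q}(\zeta_{p^n})$; it is also why one passes to the subring $\bigcup_n D_n$ rather than using the whole series ring $K((x;\sigma))$, which is itself \emph{not} weakly locally finite, since a single series may have coefficients running through all of $K$.
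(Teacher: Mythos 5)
Your construction is correct, and it reaches the statement by a genuinely different route than the paper. Both arguments share the same skeleton --- realize the example as a directed union of centrally finite division rings, so that weak local finiteness follows from the fact that division subrings of centrally finite division rings are centrally finite --- but the concrete realizations differ. The paper works inside a Mal'cev--Neumann ring $K((G,\Phi))$ with $G=\bigoplus_{i}\mathbb{Z}$ acting on $K=\mathbb{Q}(\sqrt{p_1},\sqrt{p_2},\ldots)$ by sign changes; its non-algebraic element is the infinite series $\gamma=x_1^{-1}+x_2^{-1}+\cdots$, and central finiteness of each layer $R_n$ rests on the trick that $\gamma_n=\gamma-(x_1^{-1}+\cdots+x_n^{-1})$ is central in $R_n$, followed by a hands-on computation of $Z(R_\infty)$. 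You instead take a tower of skew Laurent series rings $D_n=K_n((x;\sigma_n))$ over the cyclotomic fields $K_n=\mathbb{Q}(\zeta_{p^n})$, with a single automorphism $\sigma$ of infinite order whose restrictions have finite order $m_n$; then each $D_n$ is a cyclic algebra with $Z(D_n)=\mathbb{Q}((x^{m_n}))$, the center of the union collapses to $\mathbb{Q}$ because $m_n\to\infty$, and the transcendental element is simply $x$. What your approach buys is brevity and transparency: the center computations are the standard ones for skew Laurent series, and non-algebraicity is immediate; what it costs is reliance on those standard facts (existence of a compatible system of generators of $\mathrm{Gal}(K_n/\mathbb{Q})$ --- e.g. a primitive root mod $p^2$ --- and the identification of $Z(K_n((x;\sigma_n)))$), whereas the paper's example is verified from scratch inside one explicit ring. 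Two small remarks: your parenthetical justification for the existence of $\sigma$ is compressed (one should either invoke a primitive root mod $p^2$, which remains primitive mod all $p^n$, or the nonemptiness of an inverse limit of finite nonempty sets of generators), and your closing claim that the full ring $K((x;\sigma))$ fails to be weakly locally finite is unproved --- but it is also unused, so neither point affects the correctness of the argument.
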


\begin{proof}
Denote by $G=\bigoplus\limits_{i=1}^{\infty}\mathbb{Z}$ the  direct sum of infinitely many copies of the additive group $\mathbb{Z}$. For any positive integer $i$, denote by $x_i= (0, \ldots, 0, 1, 0, \ldots)$ the element of $G$ with  $1$ in the $i$-th position and $0$ elsewhere.  Then $G$ is a free abelian group generated by all $x_i$ and every element $x\in G$ is written uniquely in the form
$$x=\sum\limits_{i\in I}n_i x_i,\eqno(1)$$
with $n_i\in\mathbb{Z}$ and some finite set $I$.

Now, we define an order in $G$ as follows:

For elements  $x=(n_1, n_2, n_3, \ldots)$ and  $y=(m_1, m_2, m_3, \ldots)$ in $G$, define  $x<y$ if either  $n_1<m_1$  or there exists $k\in \mathbb{N}$ such that  $n_1=m_1,\ldots, n_k=m_k$ and  $n_{k+1}<m_{k+1}$. Clearly, with this order $G$ is a totally ordered set.

Suppose that  $p_1<p_2<\ldots <p_n<\ldots$ is a sequence of prime numbers and 
$K=\mathbb{Q}(\sqrt{p_1},\sqrt{p_2},\ldots)$ is the subfield of the field $\mathbb{R}$ of real numbers generated by $\mathbb{Q}$ and  $\sqrt{p_1},\sqrt{p_2},\ldots$, where $\mathbb{Q}$ is the field of rational numbers. For any $i\in\mathbb{N}$, suppose that  $f_i:K\to K$ is  $\mathbb{Q}$-isomorphism satisfying the following condition: 
$$f_i(\sqrt{p_i})=-\sqrt{p_i};\quad \text{and } \ f_i(\sqrt{p_j})=\sqrt{p_j}\quad \text{for any }j\neq i.$$ 
It is easy to verify that $f_i f_j=f_j f_i$ for any $i,j\in\mathbb{N}.$ \\[-6pt]

\noindent {\em $\bullet$ Step 1. Proving that, for  $x\in K, f_i(x)=x$ for any $i\in \mathbb{N}$ if and only if  $x\in\mathbb{Q}$:}

The converse is obvious. Now, suppose that $x\in K$ such that $f_i(x)=x$ for any $i\in \mathbb{N}$. By setting $K_0=\mathbb{Q}$ and $K_i=\mathbb{Q}(\sqrt{p_1}, \ldots, \sqrt{p_i})$ for $i\geq 1$, we have the following ascending series:
$$K_0\subset K_1\subset\ldots\subset K_i\subset\ldots$$
If $x\not\in \mathbb{Q}$, then there exists $i\geq 1$ such that $x\in K_i\setminus K_{i-1}$. So, we have
$x=a+b\sqrt{p_i}$, with $a, b\in K_{i-1}$ and $b\neq 0.$
Since $f_i(x)=x,0=x-f_i(x)=2b\sqrt{p_i}$, a contradiction.\\[-6pt] 

\noindent {\em $\bullet$  Step 2. Constructing a Laurent series ring:} 

For any  $x=(n_1, n_2, ...)=\sum\limits_{i\in I} n_i x_i\in G$, define $\Phi_x:=\prod\limits_{i\in I} f_i^{n_i}.$ Clearly $\Phi_x\in Gal(K/\mathbb{Q})$ and the map $\Phi: G\rightarrow Gal(K/\mathbb{Q}),$
defined by $\Phi(x)=\Phi_x$ is a group homomorphism. 
The following conditions hold.

\begin{enumerate}[{\rm i)}]
  \item $\Phi(x_i)=f_i$ for any $i\in \mathbb{N}$. 
  \item If $x=(n_1, n_2, \ldots)\in G$, then $\Phi_x(\sqrt{p_i})=(-1)^{n_i} \sqrt{p_i}$.
\end{enumerate}

For the convenience, from now on we write the operation in  $G$ multiplicatively. For $G$ and $K$ as above, consider formal sums of the form
$$\alpha=\sum\limits_{x\in G} a_x x,\quad a_x\in K.$$
For  such an $\alpha$, define the support of $\alpha$ by $supp(\alpha)=\{x\in G: a_x\neq 0\}$. Put
 $$D=K((G,\Phi)):=\Big\{\alpha=\sum\limits_{x\in G} a_x x, a_x\in K : supp(\alpha) \text{ is  well-ordered }\Big\}.$$ 
For  $\alpha=\sum\limits_{x\in G} a_x x$ and $\beta=\sum\limits_{x\in G} b_x x$ from $D$, define 
$$\alpha+\beta=\sum\limits_{x\in G} (a_x+b_x) x;\quad \text{and}\quad 
\alpha\beta=\sum\limits_{z\in G} \Big(\sum\limits_{xy=z}a_x \Phi_x(b_y)\Big) z.$$
With  operations defined as above, $D=K((G,\Phi))$ is a division ring (we refer to [6, pp. 243-244]).
Moreover, the following conditions hold.

\begin{enumerate}
  \item[iii)]  For any $x \in G, a\in K$, $xa=\Phi_x(a) x$.
  \item[iv)] For any $i\neq j$, $x_i\sqrt{p_i}=-\sqrt{p_i}x_i$ and  $x_j\sqrt{p_i}=\sqrt{p_i}x_j$.
  \item[v)] For any $i\neq j$ and $n\in \mathbb{N}$,  $x_i^n \sqrt{p_i}=(-1)^n\sqrt{p_i} x_i^n$ and  $x_j^n \sqrt{p_i}=\sqrt{p_i}x_j^n$.
\end{enumerate}

\noindent {\em $\bullet$ Step 3. Finding the center of $D$:}

Put $H=\{x^2: x\in G\}$ and $\mathbb{Q}((H))=\Big\{\alpha=\sum\limits_{x\in H} a_x x, a_x\in \mathbb{Q}: supp(\alpha) \text{ is well-ordered }\Big\}.$
It is easy to check that  $H$ is a subgroup of $G$ and for every $x\in H$, $\Phi_x=Id_K$.

Denote by $F$ the center of $D$. We claim that $F=\mathbb{Q}((H)).$ Suppose that $\alpha=\sum\limits_{x\in H} a_x x \in \mathbb{Q}((H))$. Then, for every  $\beta=\sum\limits_{y\in G} b_y y \in D$, we have 
$\Phi_x (b_y)= b_y$ and $\Phi_y (a_x)= a_x$. Hence
\begin{eqnarray*}
\alpha\beta&=&\sum\limits_{z\in G} \Big(\sum\limits_{xy=z}a_x \Phi_x(b_y)\Big) z
=\sum\limits_{z\in G} \Big(\sum\limits_{xy=z}a_x b_y\Big) z,\\
\beta\alpha&=&\sum\limits_{z\in G} \Big(\sum\limits_{xy=z}b_y \Phi_y(a_x)\Big) z
 =\sum\limits_{z\in G} \Big(\sum\limits_{xy=z}a_x b_y\Big) z.
\end{eqnarray*} 
Thus, $\alpha\beta=\beta\alpha$ for every $\beta\in D$, so $\alpha \in F$.

Conversely, suppose that  $\alpha=\sum\limits_{x\in G} a_x x \in F.$
Denote by  $S$ the set of all elements $x$ appeared in the expression of $\alpha$. Then, it suffices to prove that  $x\in H$ and $a_x\in \mathbb{Q}$ for any $x\in S$. 
In fact, since $\alpha \in F$, we have $\sqrt{p_i}\alpha=\alpha \sqrt{p_i}$ and $\alpha x_i =x_i\alpha$ for any $i\geq 1$,
i.e.
$\sum\limits_{x\in S} \sqrt{p_i}a_x x=\sum\limits_{x\in S} \Phi_x(\sqrt{p_i})a_x x$ and $\sum\limits_{x\in S} a_x (x x_i)=\sum\limits_{x\in S} \Phi_{x_i}(a_x) (x_i x).$
Therefore, by conditions mentioned in the beginning of {\em  Step 2}, we have $\sqrt{p_i}a_x =\Phi_x(\sqrt{p_i})a_x=(-1)^{n_i}\sqrt{p_i}a_x$ and $a_x=\Phi_{x_i}(a_x)=f_i(a_x)$ for any $x=(n_1, n_2, \ldots)\in S$.
From the first equality it follows that  $n_i$ is even for any  $i\geq 1$. Therefore $x\in H$. From the second equality it follows that $a_x=f_i (a_x)$ for any $i\geq 1$. So by {\em Step 1}, we have $a_x\in \mathbb{Q}$. Therefore $\alpha \in \mathbb{Q}((H)).$
Thus,  $F=\mathbb{Q}((H)).$\\[-6pt]

\noindent {\em $\bullet$ Step 4. Proving that $D$ is not algebraic over $F$:}

Suppose that  $\gamma = x_1^{-1}+x_2^{-1}+\ldots$ is an infinite formal sum.  
Since  $x_1^{-1}<x_2^{-1}<\ldots,  supp(\gamma)$ is  well-ordered. Hence $\gamma\in D$.
Consider the equality
$$a_0+a_1\gamma+a_2\gamma^2+\ldots+a_n\gamma^n=0,\quad a_i\in F.\eqno(2)$$
Note that $X=x_1^{-1}x_2^{-1}...x_n^{-1}$ does not appear in the expressions of $\gamma, \gamma^2,\ldots,\gamma^{n-1}$ and the coefficient of  $X$ in the expression of $\gamma^n$ is  $n!$. Therefore, the coefficient of  $X$ in the expression on left side of the equality $(2)$ is $a_n.n!$. It follows that  $a_n=0$. 
By induction, it is easy to see that  $a_0=a_1=\ldots =a_n=0$.
Hence, for any  $n\in\mathbb{N}$, the set $\{1,\gamma, \gamma^2, \ldots,\gamma^n\}$ is  independent over $F$. Consequently,  $\gamma$ is not algebraic over $F$.\\[-6pt]

\noindent {\em $\bullet$  Step 5. Constructing a division subring of $D$ which is a weakly locally finite:}

Consider the element $\gamma$ from {\em Step 4}. For any $n\geq 1$, put 
$$R_n=F(\sqrt{p_1},\sqrt{p_2},\ldots, \sqrt{p_n},x_1,x_2,\ldots, x_n, \gamma),$$
and  $R_\infty=\bigcup\limits_{n=1}^\infty R_n.$
First, we prove that $R_n$ is centrally finite for each positive integer $n$.
Consider the element  $$\gamma_n=x_{n+1}^{-1}+x_{n+2}^{-1}+\ldots \quad \text{(infinite formal sum)}.$$ 
Since $\gamma_n =\gamma -(x_1^{-1}+x_2^{-1}+\ldots+x_n^{-1})$, we conclude that 
$\gamma_n\in R_n$ and 
$$F(\sqrt{p_1},\sqrt{p_2},\ldots, \sqrt{p_n},x_1,x_2,\ldots , x_n, \gamma)=F(\sqrt{p_1},\sqrt{p_2},\ldots, \sqrt{p_n},x_1,x_2,\ldots, x_n, \gamma_n).$$
Note that $\gamma_n$ commutes with all  $\sqrt{p_i}$ and all $x_i$ (for $i=1,2,...,n$).  Therefore
\begin{eqnarray*}
R_n&=&F(\sqrt{p_1},\sqrt{p_2},\ldots, \sqrt{p_n},x_1,x_2,\ldots,x_n, \gamma_n)\\
&=&F(\gamma_n)(\sqrt{p_1},\sqrt{p_2},\ldots, \sqrt{p_n},x_1,x_2,\ldots, x_n).
\end{eqnarray*}
In combination with the equalities  $(\sqrt{p_i})^2=p_i, x_i^2\in F$, $\sqrt{p_i}x_j=x_j\sqrt{p_i}, i\ne j,$ $\sqrt{p_i}x_i=-x_i\sqrt{p_i},$
it follows that every element $\beta$ from  $R_n$ can be written in the form 
$$\beta = \sum\limits_{0\leq \varepsilon_i, \mu_i\leq 1} a_{(\varepsilon_1, ..., \varepsilon_n,  \mu_1, ..., \mu_n)} (\sqrt{p_1})^{\varepsilon_1}\ldots (\sqrt{p_n})^{\varepsilon_n} x_1^{\mu_1} \ldots x_n^{\mu_n}, \text{ where }a_{(\varepsilon_1, ..., \varepsilon_n\mu_1, ..., \mu_n)}\in F(\gamma_n).$$
Hence  $R_n$ is a vector space over  $F(\gamma_n)$ having the finite set $B_n$ which consists of the products  
$$(\sqrt{p_1})^{\varepsilon_1}\ldots (\sqrt{p_n})^{\varepsilon_n} x_1^{\mu_1} \ldots x_n^{\mu_n}, 0\le \varepsilon_i, \mu_i\le 1$$
 as a base.
Thus,  $R_n$ is a finite dimensional vector space over $F(\gamma_n)$. 
Since  $\gamma_n$ commutes with all  $\sqrt{p_i}$ and all $x_i, F(\gamma_n)\subseteq Z(R_n)$. It follows that  $\dim_{Z(R_n)}R_n\le \dim_{F(\gamma_n)}R_n<\infty$ and consequently, $R_n $ is centrally finite. 

For any finite subset $S\subseteq R_\infty$, there exists  $n$ such that $S\subseteq R_n$. Therefore, the division subring of $R_\infty$, generated by $S$ over $F$ is contained in $R_n$, which is centrally finite. Thus,  $R_\infty$ is weakly locally  finite.\\[-6pt] 

\noindent {\em $\bullet$  Step 6. Finding the center of $R_\infty$:}

We claim that $Z(R_\infty)=F$. Put $S_n=\{\sqrt{p_1}, ..., \sqrt{p_n}, x_1, ..., x_n\}.$
Since for any  $i\neq j$,  
$x_i^2 , (\sqrt{p_i})^2 \in F$, $x_i x_j = x_j x_i$, $\sqrt{p_i}\sqrt{p_j}=\sqrt{p_j}\sqrt{p_i}$, $x_i\sqrt{p_j}=\sqrt{p_j}x_i$, $x_i\sqrt{p_i}=-\sqrt{p_i}x_i$, 
every element from  $F[S_n]$ can be expressed in the form  
$$\alpha = \sum_{0\leq \varepsilon_i, \mu_i\leq 1} a_{(\varepsilon_1, ..., \varepsilon_n,  \mu_1, ..., \mu_n)} (\sqrt{p_1})^{\varepsilon_1}\ldots (\sqrt{p_n})^{\varepsilon_n} x_1^{\mu_1} \ldots x_n^{\mu_n},\quad  a_{(\varepsilon_1, ..., \varepsilon_n,  \mu_1, ..., \mu_n)}\in F.\eqno(3)$$
Moreover, the set  $\mathcal{B}_n$ consists of products  $(\sqrt{p_1})^{\varepsilon_1}\ldots (\sqrt{p_n})^{\varepsilon_n} x_1^{\mu_1} \ldots x_n^{\mu_n}, 0\leq \varepsilon_i, \mu_i\leq 1$ is finite of  $2^{2n}$ elements. 
Hence, $F[S_n]$ is a finite dimensional vector space over $F$. So, it follows that $F[S_n]=F(S_n)$. Therefore, every element from  $F(S_n)$ can be expressed in the form $(3)$. 

In the first, we show that $Z(F(S_1))=F$. Thus, suppose that $\alpha\in Z(F(S_1))$. 
Since $x_1^2, (\sqrt{p_1})^2=p_1\in F$  and  $x_1 \sqrt{p_1}=-\sqrt{p_1}x_1$, every element  $\alpha \in F(S_1) = F(\sqrt{p_1}, x_1)$ can be expressed in the following form:
$$\alpha = a+b\sqrt{p_1}+ c x_1+d \sqrt{p_1} x_1, \quad a,b,c,d\in F.$$
Since $\alpha$ commutes with $x_1$ and $\sqrt{p_1}$, we have  
$$ax_1+b\sqrt{p_1}x_1+ c x_1^2+d \sqrt{p_1} x_1^2=ax_1-b\sqrt{p_1}x_1+ c x_1^2-d \sqrt{p_1} x_1^2,$$
and
$$a\sqrt{p_1}+bp_1- c\sqrt{p_1} x_1-dp_1 x_1= a\sqrt{p_1}+bp_1+ c\sqrt{p_1} x_1+d p_1 x_1.$$
From the first equality it follows that $b=d=0$, while from the second equality we obtain  $c=0$. Hence, $\alpha = a \in F$ and consequently, $Z(F(S_1))=F$. 

Suppose that $n\geq 1$ and $\alpha\in Z(F(S_n))$. By (3), $\alpha$ can be expressed in the form
$$\alpha=a_1+a_2\sqrt{p_n}+a_3x_n+a_4\sqrt{p_n}x_n, \mbox{ with } a_1, a_2, a_3, a_4\in F(S_{n-1}).$$

From the equality  $\alpha x_n=x_n\alpha$, it follows that
$$a_1x_n+a_2\sqrt{p_n}x_n+a_3x_n^2+a_4\sqrt{p_n}x_n^2=a_1x_n-a_2\sqrt{p_n}x_n+a_3x_n^2-a_4\sqrt{p_n}x_n^2.$$

Therefore, $a_2+a_4x_n=0$ and consequently we have $a_2=a_4=0$. Now, from the equality $\alpha\sqrt{p_n}=\sqrt{p_n}\alpha$, we have $a_1\sqrt{p_n}-a_3\sqrt{p_n}x_n=a_1\sqrt{p_n}+a_3\sqrt{p_n}x_n$ and it follows that $a_3=0$. Therefore, $\alpha=a_1\in F(S_{n-1})$ and this means that $\alpha\in Z(F(S_{n-1}))$. Thus, we have proved that $Z(F(S_n))\subseteq Z(F(S_{n-1}))$. By induction we can conclude that $Z(F(S_n))\subseteq Z(F(S_1))$ for any positive integer $n$. Since $F\subseteq Z(F(S_n))\subseteq Z(F(S_1))=F$, it follows that $Z(F(S_n))=F$ for any positive integer $n$. Now, suppose that $\alpha\in Z(R_{\infty})$. Then, there exists some $n$ such that $\alpha\in R_n$ and clearly $\alpha\in Z(F(S_n))=F$. Hence $Z(R_{\infty})=F$. \\[-6pt]

\noindent {\em $\bullet$ Step 7. Proving that $R_\infty$ is not algebraic over $F$:}

It was shown in  {\em Step 4}  that  $\gamma \in R_\infty$ is not algebraic over $F$.
\end{proof}

\section{Herstein's Conjecture for weakly locally finite \\division rings}

Let $K \varsubsetneq D$ be  division rings . Recall that an element $x\in D$ is {\em radical} over $K$ if there exists some positive integer $n(x)$ depending on $x$ such that $x^{n(x)}\in K$. A subset $S$ of $D$ is {\em radical} over $K$ if every element from $S$ is radical over $K$. In 1978, I.N. Herstein (cf. \cite{her}) conjectured that given a subnormal subgroup $N$ of $D^*$, if $N$ is radical over center $F$ of $D$, then $N$ is central, i. e. $N$ is contained in $F$. Herstein, himself in the cited above paper proved this fact for the special case, when $N$ is torsion group. However, the problem remains  still open in general. In \cite{hai-huynh}, it was proved that  this  conjecture is  true in the finite dimensional case. In this section, we  shall prove that this conjecture  is also true for weakly locally finite division rings. First, we note the following two lemmas  we need for our further purpose.

\begin{lem}\label{lem:3.1} 
Let $D$ be a division ring with center $F$. If $N$ is a subnormal subgroup of $D^*$, then $Z(N)=N\cap F$.
\end{lem}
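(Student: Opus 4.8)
The plan is to show both inclusions $Z(N)\subseteq N\cap F$ and $N\cap F\subseteq Z(N)$. The second is trivial: any element of $F$ commutes with everything in $D^*$, hence in particular with every element of $N$, so $N\cap F\subseteq Z(N)$. The real content is the first inclusion, i.e.\ showing that if $a\in N$ commutes with every element of $N$, then $a$ is central in $D$. For this I would invoke the classical Cartan--Brauer--Hua theorem (or one of its standard subnormal-subgroup refinements): if a subnormal subgroup $N$ of $D^*$ is not central, then it generates $D$ as a division ring, and more usefully, the centralizer considerations force $N$ to be ``large'' in $D$.

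Concretely, let $a\in Z(N)$ and suppose $a\notin F$. The strategy is to derive a contradiction by producing an element of $N$ that does not commute with $a$. Consider the centralizer $C_{D}(a)$, a proper division subring of $D$ since $a$ is not central. Because $N$ is subnormal in $D^*$, there is a finite chain $N=N_0\triangleleft N_1\triangleleft\cdots\triangleleft N_r=D^*$. I would argue by induction along this chain: it suffices to handle the case where $N$ is normal in $D^*$, since $Z(N)$ being contained in $F$ can be bootstrapped up the chain once we know each $N_i$ is non-central (and if some $N_i$ is central then so is $N$, making the claim vacuous for that case). So assume $N\triangleleft D^*$. Then for any $d\in D^*$ and any $n\in N$ we have $dnd^{-1}\in N$, hence $a$ commutes with $dnd^{-1}$; equivalently $d^{-1}ad$ commutes with $n$ for all $n\in N$, so $d^{-1}ad\in Z(N)$ as well, for every $d\in D^*$. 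Thus the whole conjugacy class of $a$ lies in $Z(N)$, and in particular $Z(N)$ is closed under conjugation by $D^*$.

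Now fix $n\in N$ with $n\notin F$ (if no such $n$ exists, $N\subseteq F$ and the lemma is immediate). Since $a$ and all its $D^*$-conjugates lie in $Z(N)$, and $Z(N)$ is a commutative subset, all $D^*$-conjugates of $a$ commute with one another; the additive subgroup (indeed the subfield) of $D$ they generate together with $F$ is then a commutative division subring stable under conjugation by $D^*$, hence by Cartan--Brauer--Hua it is either contained in $F$ or equal to $D$. Since $D$ is not commutative (it has the non-central element $n$), we conclude this subfield is contained in $F$, forcing $a\in F$ --- a contradiction. Therefore $Z(N)\subseteq F$, and combined with the obvious $Z(N)\subseteq N$ we get $Z(N)\subseteq N\cap F$.

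The main obstacle I anticipate is making the invariance-under-conjugation argument airtight in the merely \emph{subnormal} (not normal) case: one must be careful that passing down the subnormal chain does not lose the non-centrality hypothesis, and one should cite the correct form of the Cartan--Brauer--Hua / Stuart--Wehrfritz type result that applies to subnormal subgroups rather than just normal ones. An alternative, perhaps cleaner route avoids Cartan--Brauer--Hua entirely: take $a\in Z(N)\setminus F$, pick $x\in D$ with $xa\neq ax$, and use that $N$ is subnormal to find a commutator-type element built from $x$ and elements of $N$ that must lie in $N$ yet fails to commute with $a$ --- typically exploiting that the additive structure $(1+\text{(suitable element of }N-1))$ lands back in $N^{*}$-related sets. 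Either way, the crux is the same: non-central subnormal subgroups are too big to have a non-central centralizing element.
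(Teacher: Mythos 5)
Your overall plan is the right one, and the easy inclusion $N\cap F\subseteq Z(N)$ is handled correctly. Note that the paper's own proof is essentially your opening paragraph: it simply quotes the known fact (Scott, 14.4.2) that a non-central subnormal subgroup $N$ of $D^*$ satisfies $C_D(N)=F$, from which $Z(N)=N\cap C_D(N)\subseteq N\cap F$ is immediate. If you had stopped there and cited that result (equivalently, Stuth's subnormal generalization of Cartan--Brauer--Hua, which says a non-central subnormal subgroup generates $D$ as a division ring, so anything centralizing $N$ centralizes $D$), your proof would be complete and essentially identical to the paper's.

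The gap is in your attempt to prove this fact by hand, specifically in the reduction ``it suffices to handle the case $N\triangleleft D^*$, since the claim can be bootstrapped up the chain.'' That bootstrap does not go through. With a chain $N=N_0\triangleleft N_1\triangleleft\cdots\triangleleft N_r=D^*$, conjugation by an arbitrary $d\in D^*$ need not preserve $N$, only conjugation by elements of $N_1$ does; so the commuting family of conjugates of $a$, and the subfield it generates with $F$, is invariant only under conjugation by $N_1$, not by $D^*$. The classical Cartan--Brauer--Hua theorem therefore does not apply, and what you would need is precisely the statement that a division subring invariant under a non-central \emph{subnormal} subgroup is central or all of $D$ --- i.e.\ the Stuth/Scott result itself. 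Nor can you induct by treating $N\triangleleft N_1$ ``as the normal case one level down,'' because $N_1$ is not the multiplicative group of a division ring, so your normal-case argument (which is fine as far as it goes, for $N\triangleleft D^*$) cannot be iterated along the chain. So the honest options are exactly the two you flagged at the end: cite the correct subnormal form of the theorem, as the paper does, or give a genuinely more involved self-contained proof; as written, the subnormal case is not established.
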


\begin{proof} 
If $N$ is contained in $F$, then there is nothing to prove. Thus, suppose that $N$ is non-central. By [9, 14.4.2, p. 439], $C_D(N)=F$. Hence $Z(N)\subseteq N\cap F$. Since the inclusion $N\cap F\subseteq Z(N)$ is obvious, $Z(N)= N\cap F$.
\end{proof}

\begin{lem}\label{lem:3.2}
If  $D$ is a weakly locally  finite division ring, then $Z(D')$ is a torsion group.
\end{lem}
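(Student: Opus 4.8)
The plan is to combine Lemma~\ref{lem:3.1} with the reduced norm of a centrally finite division ring. Since $D'$ is a normal, hence subnormal, subgroup of $D^*$, Lemma~\ref{lem:3.1} gives $Z(D')=D'\cap F$. So it suffices to show that every element of $D'\cap F$ has finite order.

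Fix $a\in D'\cap F$ and write $a=\prod_{i=1}^{k}[x_i,y_i]$ as a product of commutators with $x_i,y_i\in D^*$. Let $D_0$ be the division subring of $D$ generated by the finite set $S=\{x_1,y_1,\dots,x_k,y_k\}$, and put $F_0=Z(D_0)$. Because $D$ is weakly locally finite, $D_0$ is centrally finite; write $\dim_{F_0}D_0=m^2$, so $m=\deg D_0$. All the $x_i,y_i$ lie in $D_0^*$, so $a\in D_0'$; and $a\in F=Z(D)$ commutes with every element of $D_0$, hence $a\in D_0'\cap F_0$.

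Now I would invoke the reduced norm $\mathrm{Nrd}\colon D_0^*\to F_0^*$ of the finite-dimensional central division algebra $D_0$. It is multiplicative with values in the abelian group $F_0^*$, so every commutator, and therefore every element of $D_0'$, has reduced norm $1$; in particular $\mathrm{Nrd}(a)=1$. On the other hand, for a central scalar $c\in F_0$ one has $\mathrm{Nrd}(c)=c^{m}$ (extend scalars to a splitting field, where $c$ becomes the scalar matrix $cI_m$). Applying this to $a\in F_0$ gives $a^{m}=\mathrm{Nrd}(a)=1$, so $a$ has finite order. As $a$ was arbitrary, $Z(D')=D'\cap F$ is a torsion group.

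The only real content is the reduction step: a given element of $Z(D')$ involves only finitely many elements of $D^*$, so it already lies in a finitely generated — hence, by weak local finiteness, centrally finite — division subring, where the classical reduced-norm machinery applies. Beyond that there is no genuine obstacle; one simply needs the standard facts that $\mathrm{Nrd}$ is multiplicative and that $\mathrm{Nrd}(c)=c^{\deg D_0}$ for $c$ central, which may instead be quoted from a standard reference on skew fields.
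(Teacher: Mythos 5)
Your proposal is correct and follows essentially the same route as the paper: reduce to the centrally finite division subring generated by the entries of the commutator expression, observe the central element lies in its center, and kill it with a multiplicative norm map (the paper uses the ordinary norm $N_{L/Z(L)}$, giving exponent $[L:Z(L)]$, while you use the reduced norm, giving exponent $\deg D_0$ — an inessential difference).
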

\begin{proof} By Lemma  \ref{lem:3.1}, $Z(D')=D'\cap F$. For any $x\in Z(D')$, there exists some positive integer $n$ and some $a_i, b_i\in D^*, 1\leq i\leq n$, such that 
$$x=a_1b_1a_1^{-1}b_1^{-1}a_2b_2a_2^{-1}b_2^{-1}\ldots a_nb_na_n^{-1}b_n^{-1}.$$
Set $S:=\{a_i,b_i: 1\leq i\leq n\}$.
Since $D$ is weakly locally finite, the division subring $L$ of $D$ generated by $S$ is centrally finite.
Put $n=[L:Z(L)].$ 
Since $x\in F$, $x$ commutes with every element of $S$. Therefore,  $x$ commutes with every element of  $L$, and consequently,  $x\in Z(L)$. So,  
$$x^n=N_{L/Z(L)}(x)=N_{L/Z(L)}(a_1b_1a_1^{-1}b_1^{-1}a_2b_2a_2^{-1}b_2^{-1}\ldots a_nb_na_n^{-1}b_n^{-1})=1.$$ 
Thus, $x$ is torsion.
\end{proof}
 
In [4, Theorem 1], Herstein proved that, if in a division ring $D$ every multiplicative commutator $aba^{-1}b^{-1}$ is torsion, then $D$ is commutative. Further, with the assumption that $D$ is a finite dimensional vector space over its center $F$, he proved [4, Theorem 2] that, if every multiplicative commutator  in $D$ is radical over $F$, then $D$ is commutative. Now, using Lemma \ref{lem:3.2}, we can carry over  the last fact for weakly locally finite division rings.

\begin{thm}\label{thm:3.2}
Let $D$ be a weakly locally  finite division ring with center $F$. If every multiplicative commutator in $D$ is radical over $F$, then  $D$ is commutative.
\end{thm}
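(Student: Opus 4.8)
The plan is to reduce the statement to Herstein's theorem [4, Theorem 1] on commutators being torsion, by showing that the radicality hypothesis forces every multiplicative commutator to actually be torsion. The key observation is that a multiplicative commutator $c = aba^{-1}b^{-1}$ lies in the derived group $D'$; if $c$ is radical over $F$, then some power $c^{n}$ lies in $F$, hence $c^{n} \in D' \cap F = Z(D')$ by Lemma~\ref{lem:3.1}. By Lemma~\ref{lem:3.2}, $Z(D')$ is a torsion group, so $c^{n}$ is torsion, and therefore $c$ itself is torsion. Once every multiplicative commutator in $D$ is torsion, Herstein's [4, Theorem 1] — which holds for arbitrary division rings with no finiteness assumption — immediately gives that $D$ is commutative.

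Writing this out: first I would take an arbitrary multiplicative commutator $c = aba^{-1}b^{-1} \in D^{*}$, where $a, b \in D^{*}$. By hypothesis $c$ is radical over $F$, so there is a positive integer $m$ with $c^{m} \in F$. Since $c \in D'$ and $D'$ is a normal (indeed characteristic) subgroup of $D^{*}$, we have $c^{m} \in D' \cap F$. Applying Lemma~\ref{lem:3.1} with $N = D'$ (which is subnormal, in fact normal, in $D^{*}$) yields $D' \cap F = Z(D')$, so $c^{m} \in Z(D')$. Then Lemma~\ref{lem:3.2}, which uses precisely the weak local finiteness of $D$, tells us $Z(D')$ is torsion, so $c^{m}$ has finite order, whence $c$ has finite order, i.e.\ $c$ is torsion. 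As $c$ was an arbitrary commutator, every multiplicative commutator in $D$ is torsion.

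Now I would invoke Herstein's theorem [4, Theorem 1]: if in a division ring $D$ every multiplicative commutator $aba^{-1}b^{-1}$ is torsion, then $D$ is commutative. This concludes the argument. One caveat worth checking in the write-up is the degenerate case where $D' \subseteq F$ (equivalently $D' = Z(D')$ already, which can happen trivially if $D$ is commutative or if the commutators vanish); Lemma~\ref{lem:3.1} handles this since it covers both the central and non-central cases for $N = D'$, and the conclusion $D$ commutative is what we want anyway.

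I do not anticipate a serious obstacle here — the real content has been front-loaded into Lemmas~\ref{lem:3.1} and~\ref{lem:3.2}, and especially into Lemma~\ref{lem:3.2}, where weak local finiteness is used to pass from a single commutator-expression to a field norm computation via $[L:Z(L)] < \infty$ for the centrally finite division subring $L$ generated by the finitely many elements appearing in that expression. The only thing to be careful about is making sure the reduction cleanly separates the two cases in Lemma~\ref{lem:3.1} and correctly identifies that a radical element of $D'$ has a power landing in $Z(D')$ rather than merely in $F$; that step is immediate but should be stated explicitly. The rest is a direct citation of Herstein's commutator-torsion theorem, which carries no finiteness hypothesis and so applies verbatim.
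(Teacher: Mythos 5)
Your proposal is correct and follows essentially the same route as the paper: both reduce to Herstein's [4, Theorem 1] by noting that a commutator $c$ with $c^m\in F$ has $c^m\in D'\cap F=Z(D')$, which is torsion by Lemma~\ref{lem:3.2}, so $c$ is torsion. Your write-up merely makes explicit the intermediate step (via Lemma~\ref{lem:3.1}) that the paper leaves implicit.
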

\begin{proof} For any $a, b\in D^*$, there exists a positive integer $n=n_{ab}$ depending on $a$ and $b$ such that $(aba^{-1}b^{-1})^n\in F.$ Hence, by  Lemma  \ref{lem:3.2}, it follows that $aba^{-1}b^{-1}$ is torsion. Now, by [4, Theorem 1], $D$ is commutative.
\end{proof}

The following theorem gives the affirmative answer to Conjecture 3 in \cite{her} for  weakly locally finite division rings.

\begin{thm}\label{thm:3.3}
Let $D$ be a weakly locally  finite division ring with center $F$ and  $N$ be a  subnormal subgroup of  $D^*$. If  $N$ is radical over $F$, then $N$ is central, i.e. $N$ is contained in $F$.
\end{thm}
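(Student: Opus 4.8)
The plan is to reduce everything to the finite-dimensional case treated in \cite{hai-huynh}, using the weak local finiteness to manufacture a centrally finite division subring containing enough data. First I would dispose of the trivial case: if $N\subseteq F$ there is nothing to prove, so assume $N$ is non-central. By Lemma \ref{lem:3.1} we then know $Z(N)=N\cap F$, and by the Cartan--Brauer--Hua type result $C_D(N)=F$ (this is what underlies Lemma \ref{lem:3.1}); I expect to use the latter to pin down centers of the subrings I build. The key move: take an arbitrary $x\in N$; I want to show $x\in F$. Since $N$ is radical over $F$, there is $n=n(x)$ with $x^{n}\in F$. The strategy is to embed $x$ into a centrally finite division subring $L$ of $D$ such that $N\cap L^{*}$ is still subnormal in $L^{*}$ and still radical over $Z(L)$, then invoke the finite-dimensional theorem of \cite{hai-huynh} to conclude $N\cap L^{*}\subseteq Z(L)$, hence $x$ is central in $L$.

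The construction of $L$ is the crux. Subnormality of $N$ in $D^{*}$ means there is a finite chain $N=N_{0}\triangleleft N_{1}\triangleleft\cdots\triangleleft N_{r}=D^{*}$. To witness subnormality \emph{inside} a finitely generated subring one needs, for each conjugation relation used, finitely many group elements; but a single $x$ does not obviously generate a subnormal situation. The cleaner route, which I would pursue, is: fix $x\in N$, and for each generator-type relation pick finite sets of elements of $D$ certifying that the relevant commutators land back in the appropriate subgroup, then let $S$ be the (finite) union of all these elements together with $x$, and set $L$ to be the division subring of $D$ generated by $S$. By weak local finiteness $L$ is centrally finite. Put $M=N\cap L^{*}$. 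One checks $M$ is subnormal in $L^{*}$ — here is where the finite witnessing sets are chosen so that the truncated chain $M\cap N_{i}^{*}$-style subgroups do the job — and $M$ is radical over $F\cap L\subseteq Z(L)$. Applying the finite-dimensional version of Herstein's conjecture \cite{hai-huynh} to $L$, $M\subseteq Z(L)$; in particular $x\in Z(L)$. Since $x$ commutes with everything in $L$ and $L$ contains the chosen witnesses, a centralizer/commutator argument (using $C_{D}(N)=F$, or directly Lemma \ref{lem:3.1}) forces $x\in F$. As $x\in N$ was arbitrary, $N\subseteq F$.

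The main obstacle I anticipate is the bookkeeping that makes $M=N\cap L^{*}$ genuinely subnormal in $L^{*}$: subnormality is not automatically inherited by intersection with a subring's unit group, so the finite set $S$ must be enlarged to include conjugating elements realizing each step of the subnormal chain, and one must verify that the resulting chain inside $L^{*}$ closes up. A secondary subtlety is ensuring $Z(L)$ is small enough that ``radical over $F$'' upgrades to ``radical over $Z(L)$'' — this should follow because $F\cap L\subseteq Z(L)$ and $x^{n}\in F\cap L$, so radicality over the larger center is immediate; the real content is the subnormality transfer. Once that is in place, everything else is an invocation of \cite{hai-huynh} plus Lemma \ref{lem:3.1}. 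An alternative, possibly slicker, approach would be to first use Lemma \ref{lem:3.2}-style reasoning to show $N'$ (or a suitable piece) is torsion and then apply Herstein's torsion result \cite{her} directly, but I would only fall back to that if the subnormality-in-$L^{*}$ step proves too delicate.
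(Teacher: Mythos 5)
There is a genuine gap in your main route, and it sits exactly where you were vaguest. After you obtain $M=N\cap L^{*}\subseteq Z(L)$ and hence $x\in Z(L)$, you claim a ``centralizer/commutator argument (using $C_{D}(N)=F$, or directly Lemma \ref{lem:3.1}) forces $x\in F$.'' This does not follow: $x\in Z(L)$ only says $x$ commutes with the elements of $L$, and $L$ is generated by $x$ together with finitely many auxiliary elements — it does not contain $N$, so you cannot conclude $x\in C_{D}(N)$, and $Z(L)$ is in general much larger than $F\cap L$ (indeed $Z(L)\supseteq F\cap L$ is the only containment you have). So the single-element reduction ``$x\in Z(L)\Rightarrow x\in F$'' is unjustified, and no choice of witnesses fixes it. Ironically, the step you flagged as the main obstacle is not one at all: if $N=N_{1}\vartriangleleft\cdots\vartriangleleft N_{r}=D^{*}$ and $L$ is any division subring, then $N_{i}\cap L^{*}\vartriangleleft N_{i+1}\cap L^{*}$ holds automatically (conjugation by an element of $N_{i+1}\cap L^{*}$ keeps you in both $N_{i}$ and $L^{*}$), with $N_{r}\cap L^{*}=L^{*}$; no ``witnessing'' elements are needed. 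This is exactly how the paper intersects the chain in the proof of Lemma \ref{lem:3.4}.

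Your strategy can be repaired, but the repair needs one more ingredient you never invoke: work with \emph{pairs}. For $x,y\in N$ let $L$ be the division subring generated by $\{x,y\}$ (centrally finite by weak local finiteness); then $M=N\cap L^{*}$ is subnormal in $L^{*}$ and radical over $Z(L)$ (since $z^{n(z)}\in F\cap L\subseteq Z(L)$), so the finite-dimensional theorem of \cite{hai-huynh} gives $M\subseteq Z(L)$, whence $x$ and $y$ commute. Thus $N$ is abelian, and then the result [9, 14.4.4, p.~440] that a solvable (in particular abelian) subnormal subgroup of $D^{*}$ is central gives $N\subseteq F$. That last citation — subnormal plus solvable implies central — is the missing closing step in your write-up. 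For comparison, the paper's own proof takes the torsion route you relegated to a fallback: for $x\in N'=[N,N]$ one has $x^{n}\in F\cap D'=Z(D')$, which is torsion by Lemma \ref{lem:3.2}, so $N'$ is a torsion subnormal subgroup and Herstein's theorem [4, Theorem 8] gives $N'\subseteq F$; then $N$ is solvable and [9, 14.4.4] finishes. Both arguments are short once the right auxiliary fact is in hand, but as written your proposal does not close.
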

\begin{proof}
Consider the subgroup  $N'=[N,N]\subseteq D'$ and suppose that $x\in N'$. Since $N$ is radical over $F$, there exists some positive integer  $n$ such that $x^n\in F$. Hence $x^n \in F\cap D'=Z(D')$. By Lemma  \ref{lem:3.2},  $x^n$ is torsion, and consequently, $x$ is torsion too. Moreover, since  $N$ is subnormal in $D^*$, so is $N'$. Hence, by  [4, Theorem 8], $N'\subseteq F$. Thus, $N$ is solvable, and by [9, 14.4.4, p. 440], $N\subseteq F$.
\end{proof}

In Herstein's Conjecture a subgroup $N$ is required to be radical over center $F$ of $D$. What happen if $N$ is required to be radical over some proper division subring of $D$ (which not necessarily coincides with $F$)? In the other words, the following question should be interesting: {\em ``Let $D$ be a division ring and $K$ be a proper division subring of $D$ and given a subnormal subgroup $N$ of $D^*$. If $N$ is radical over $K$, then is it contained in center $F$ of $D$?"} In the following we give the affirmative answer to this question for a weakly locally finite ring $D$ and a normal subgroup $N$. 

\begin{lem}\label{lem:3.4} 
Let $D$ be a weakly locally  finite division ring with center $F$ and $N$ be a subnormal subgroup of $D^*$. If for every elements $x, y\in N$, there exists some positive integer $n_{xy}$ such that $x^{n_{xy}}y=yx^{n_{xy}}$, then $N\subseteq F$.
\end{lem}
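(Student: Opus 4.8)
The plan is to argue by contradiction and to push everything down into a finitely generated, hence centrally finite, division subring, where Theorem~\ref{thm:3.3} is available. So suppose $N\not\subseteq F$. Then $N$ is a non-central subnormal subgroup of $D^*$, and exactly as in the proof of Lemma~\ref{lem:3.1} one gets $C_D(N)=F$ by [9, 14.4.2, p. 439]. In particular there are elements $x,y\in N$ with $xy\neq yx$. Let $L$ be the division subring of $D$ generated by $\{x,y\}$; since $D$ is weakly locally finite, $L$ is centrally finite. Write $Z=Z(L)$ and $M=N\cap L^*$.

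The next step is to record the relevant properties of $M$. Intersecting a subnormal series of $N$ in $D^*$ with $L^*$ shows that $M$ is subnormal in $L^*$, and $M$ is non-central since $x,y\in M$ with $xy\neq yx$; hence, applying [9, 14.4.2, p. 439] inside $L$, one gets $C_L(M)=Z$. Also $M$ plainly inherits the hypothesis: for all $u,v\in M$ there is a positive integer $n$ with $u^nv=vu^n$.

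The heart of the argument is to show that $M$ is radical over $Z$. Fix $a\in M$. By hypothesis, for every $w\in M$ there is $k\ge 1$ with $a^kw=wa^k$, that is, $w\in C_L(a^k)$; hence
\[
M\subseteq\bigcup_{k\ge 1}C_L(a^k).
\]
Now each $C_L(a^k)$ is a division subring of $L$, and $C_L(a^m)\subseteq C_L(a^k)$ whenever $m\mid k$; so $C_L(a)\subseteq C_L(a^{2!})\subseteq C_L(a^{3!})\subseteq\cdots$ is an ascending chain of $Z$-subspaces of $L$. Since $\dim_Z L<\infty$, this chain stabilizes, say $C_L(a^{k!})=C_L(a^{k_0!})$ for all $k\ge k_0$, and since every $C_L(a^m)$ lies in $C_L(a^{k!})$ for $k\ge m$, one concludes $\bigcup_{k\ge1}C_L(a^k)=C_L(a^{k_0!})$. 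Therefore $M\subseteq C_L(a^{k_0!})$, which means $a^{k_0!}$ commutes with every element of $M$, i.e. $a^{k_0!}\in C_L(M)=Z$. As $a$ was an arbitrary element of $M$, this proves $M$ is radical over $Z$. I expect this to be the main obstacle: the trick is to fix one element $a$, write $M$ as the directed union of the one-element centralizers $C_M(a^k)$, and use finite-dimensionality of $L$ over $Z$ so that the corresponding chain of centralizers stabilizes — which is exactly why one must first cut down to a centrally finite $L$, since the subrings $C_D(a^k)$ of $D$ itself have no reason to be finite-dimensional over $F$.

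Finally, $L$ is centrally finite and therefore weakly locally finite, $M$ is a subnormal subgroup of $L^*$, and $M$ is radical over the center $Z$ of $L$; Theorem~\ref{thm:3.3} then yields $M\subseteq Z$. This contradicts the fact that $x,y\in M$ while $xy\neq yx$. Hence $N\subseteq F$, as required.
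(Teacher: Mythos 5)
Your proof is correct, and its overall skeleton matches the paper's: cut down to the centrally finite division subring $L$ generated by $x$ and $y$, intersect $N$ with $L^*$ to get a subnormal subgroup $M$ of $L^*$, show $M$ is radical over $Z(L)$, and invoke Theorem \ref{thm:3.3}. Where you genuinely diverge is in how radicality is established. The paper exploits the fact that $L$ is generated (as a division ring) by the two elements $x,y$: for $a\in M$ the hypothesis gives exponents $n_{ax}, n_{ay}$ with $a^{n_{ax}}x=xa^{n_{ax}}$ and $a^{n_{ay}}y=ya^{n_{ay}}$, so the single power $a^{n_{ax}n_{ay}}$ commutes with both generators and hence with all of $L$, i.e. lies in $Z(L)$ --- no dimension count and no centralizer theorem are needed for this step. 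You instead fix $a$, observe $M\subseteq\bigcup_k C_L(a^k)$, use $\dim_{Z(L)}L<\infty$ to stabilize the chain $C_L(a^{k!})$, and then need $C_L(M)=Z(L)$, which forces you to set up a proof by contradiction (so that $M$ is non-central) and to invoke [9, 14.4.2] inside $L$. Both routes are valid; yours is more robust in that it would work with any centrally finite subring containing $x$ and $y$, not just the one they generate, but it pays for that with the chain argument and an extra appeal to Scott's centralizer theorem, whereas the paper's two-line trick lets it prove directly that arbitrary $x,y\in N$ commute and then finish with [9, 14.4.4] (abelian subnormal subgroups are central), a reference you avoid at the end but effectively replace by 14.4.2 at the start.
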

\begin{proof} 
Since $N$ is subnormal in $D^*$, there exists the following series of subgroups
$$N=N_1\vartriangleleft N_2\vartriangleleft\ldots\vartriangleleft N_r=D^*.$$
Suppose that $x, y\in N$. Let $K$ be the division subring of $D$ generated by $x$ and $y$. 
Then, $K$ is centrally finite.
By putting $M_i=K\cap N_i, \forall i\in\{1, \ldots, r\}$ we obtain the following series of subgroups
$$M_1\vartriangleleft M_2\vartriangleleft\ldots\vartriangleleft M_r=K^*.$$
For any $a\in M_1\leq N_1=N$, suppose that $n_{ax}$ and $n_{ay}$ are positive integers such that
$a^{n_{ax}}x=xa^{n_{ax}}$ and $a^{n_{ay}}y=ya^{n_{ay}}.$
Then, for $n:=n_{ax}n_{ay}$ we have 
$a^n=$ $(a^{n_{ax}})^{n_{ay}}$ $=(xa^{n_{ax}}x^{-1})^{n_{ay}}$ $=xa^{n_{ax}n_{ay}}x^{-1}$ $=xa^nx^{-1},$
and
$a^n$ $=(a^{n_{ay}})^{n_{ax}}$ $=(ya^{n_{ay}}y^{-1})^{n_{ax}}$ $=ya^{n_{ay}n_{ay}}y^{-1}$ $=ya^ny^{-1}.$
Therefore $a^n\in Z(K)$. Hence $M_1$ is radical over $Z(K)$. By  Theorem \ref{thm:3.3}, $M_1\subseteq Z(K)$. In particular, $x$ and $y$  commute with each other. Consequently, $N$ is abelian group. By [9, 14.4.4, p. 440],  $N\subseteq F$.
\end{proof}

\begin{thm}\label{thm:3.5} 
Let $D$ be a weakly locally  finite division ring with center $F$, $K$ be a proper division subring of $D$ and suppose that $N$ is a normal subgroup of $D^*$. If $N$ is radical over $K$, then $N\subseteq F$.
\end{thm}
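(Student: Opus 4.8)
The plan is to reduce the statement to Lemma \ref{lem:3.4} by showing that the hypothesis ``$N$ is radical over $K$'' forces the commuting-power condition on $N$. First I would dispose of a trivial case: if $N \subseteq F$ there is nothing to prove, so assume $N$ is non-central. Since $N$ is normal (in particular subnormal) in $D^*$ and non-central, the Cartan--Brauer--Hua type result (or [9, 14.4.2]) gives $C_D(N) = F$, so $N$ cannot centralize any element outside $F$; I expect to use this to pin down where various powers must lie.

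The core step is to analyze, for a fixed $x \in N$, the element $d x d^{-1}$ for arbitrary $d \in D^*$. Because $N$ is \emph{normal}, $d x d^{-1} \in N$, hence $d x d^{-1}$ is radical over $K$: some positive power $(d x d^{-1})^m = d x^m d^{-1}$ lies in $K$. Taking $d = 1$ we also get $x^{n} \in K$ for some $n$. The idea is to play these off one another: pick $x,y \in N$, let $L$ be the (centrally finite) division subring generated by a suitable finite set built from $x$, $y$ and elements conjugating them, and work inside $L$ where dimension-counting and the norm map are available — exactly the mechanism used in Lemma \ref{lem:3.2} and Lemma \ref{lem:3.4}. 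Concretely, from $x^{n_1} \in K$ and $(yxy^{-1})^{n_2} = y x^{n_2} y^{-1} \in K$ one wants to manufacture an integer $k$ with $x^k$ and $y$ both normalizing, or better commuting with, a common field; the radicality over $K$ should force enough powers of conjugates of $x$ into the commutative-enough part of $K$ that $x^k$ centralizes $y$ for some $k = k_{xy}$. Once I establish that for all $x,y \in N$ there is $n_{xy}$ with $x^{n_{xy}} y = y x^{n_{xy}}$, Lemma \ref{lem:3.4} applies verbatim and yields $N \subseteq F$.

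The main obstacle is precisely this passage from ``radical over the possibly noncommutative $K$'' to a commuting-power relation within $N$: unlike the center $F$, the subring $K$ need not be commutative, so $x^{n} \in K$ does not immediately tell us that $x^n$ commutes with much of anything. The trick I anticipate is to exploit normality twice: for $x \in N$ and any $d \in D^*$, both $x$ and $dxd^{-1}$ lie in $N$, so $K$ contains $x^{a}$ and $d x^{b} d^{-1}$ for suitable exponents; choosing $d$ inside $K^*$ shows $x^{\mathrm{lcm}}$ normalizes a large subfield of $K$, and choosing $d$ to be $y$ (or elements of $N$) transfers this to commuting with $y$. If a direct argument stalls, the fallback is to apply Theorem \ref{thm:3.3} to the normal subgroup $N$ of $D^*$ after replacing $K$ by the subring it generates together with finitely many conjugates, using weak local finiteness to keep that subring centrally finite and then invoking the finite-dimensional machinery of [4] and [9] on $N'$ as in the proof of Theorem \ref{thm:3.3}.
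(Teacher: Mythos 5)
Your overall strategy is the right one and matches the paper's: reduce to Lemma \ref{lem:3.4} by showing that for all $a,b\in N$ some power of $a$ commutes with $b$, and use normality of $N$ to get that conjugates $dad^{-1}$ are again radical over $K$. But the proposal stops exactly at the point where the actual work happens. You yourself flag ``the passage from radical over the possibly noncommutative $K$ to a commuting-power relation'' as the main obstacle, and neither of your suggested mechanisms closes it. The paper's proof closes it with a concrete Herstein-style computation: assuming $a\in K$, $b\notin K$ and $a^nb\neq ba^n$ for all $n$, one forms the two specific conjugates $x=(a+b)a(a+b)^{-1}$ and $y=(b+1)a(b+1)^{-1}$, which lie in $N$ by normality, takes a common exponent $m$ with $x^m=(a+b)a^m(a+b)^{-1}\in K$ and $y^m=(b+1)a^m(b+1)^{-1}\in K$, and then manipulates the identity $x^m(a+b)-y^m(b+1)=a^m(a-1)$ to get $(x^m-y^m)b=a^m(a-1)+y^m-x^ma$; if $x^m\neq y^m$ this solves for $b$ inside $K$, a contradiction, and if $x^m=y^m$ one gets $a^m=y^m$, i.e.\ $a^mb=ba^m$, again a contradiction. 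The remaining cases ($a,b\in K$ via an auxiliary element of $N\setminus K$; $a\notin K$ by first passing to $a^m\in K$) are then easy. Nothing in your sketch produces this identity or any substitute for it; ``$x^k$ normalizes a large subfield of $K$'' is not established and there is no identified ``commutative-enough part of $K$'' to land in.

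Two further concrete problems. First, your fallback of applying Theorem \ref{thm:3.3} after replacing $K$ by the division subring generated by $K$ and finitely many conjugates cannot work as stated: Theorem \ref{thm:3.3} needs $N$ radical over the \emph{center} $F$, which is not what you would obtain, and weak local finiteness only guarantees central finiteness of subrings generated by \emph{finite} sets, whereas $K$ need not be finitely generated. Second, you never treat the degenerate case $N\subseteq K$; the paper disposes of it first (if $N\subseteq K\subsetneq D$ then $K\subseteq F$ by the Cartan--Brauer--Hua/Stuth-type result cited as [9, p.~433], forcing $N\subseteq F$), and this is also needed to guarantee the existence of an element of $N\setminus K$ used in the case $a,b\in K$. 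So the proposal identifies the correct target (Lemma \ref{lem:3.4}) but is missing the central argument that makes the theorem true.
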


\begin{proof} 
Suppose that $N$ is not contained in the center $F$. If $N\setminus K=\emptyset$, then $N\subseteq K$. By [9, p. 433], either $K\subseteq F$ or $K=D$. Since $K\neq D$ by the assertion, it follows that $K\subseteq F$. Hence $N\subseteq F$, that contradicts to the assertion. Thus, we have $N\setminus K\neq\emptyset$.

Now, to complete the proof of our theorem we shall show that the elements of $N$ satisfy the requirements of Lemma \ref{lem:3.4}. Thus, suppose that  $a, b\in N$. We examine the following cases:

$1^0)$ {\em Case 1:}  $a\in K$.

{\em - Subcase 1.1:} $b\not\in K$. 

We shall prove that there exists some positive integer $n$ such that $a^nb=ba^n$. Thus, suppose that $a^nb\neq ba^n$ for any positive integer $n$. Then, $a+b\neq 0, a\neq \pm{1}$ and $b\neq \pm{1}$. So we have
$$x=(a+b)a(a+b)^{-1}, y=(b+1)a(b+1)^{-1}\in N.$$
Since $N$ is radical over $K$, we can find some positive integers $m_x$ and $m_y$ such that
$$x^{m_x}=(a+b)a^{m_x}(a+b)^{-1}, y^{m_y}=(b+1)a^{m_y}(b+1)^{-1}\in K.$$
Putting $m=m_xm_y$, we have
$$x^m=(a+b)a^m(a+b)^{-1}, y^m=(b+1)a^m(b+1)^{-1}\in K.$$
Direct calculations give the equalities
$$x^mb-y^mb+x^ma-y^m=x^m(a+b)-y^m(b+1)=(a+b)a^m-(b+1)a^m=a^m(a-1),$$
from that we get the following equality
$$(x^m-y^m)b=a^m(a-1)+y^m-x^ma.$$
If $(x^m-y^m)\neq 0$, then $b=(x^m-y^m)^{-1}[a(a^m-1)+y^m-x^ma]\in K$, that is a contradiction to the choice of $b$. Therefore $(x^m-y^m)= 0$ and consequently, $a^m(a-1)=y^m(a-1)$. Since $a\neq 1,a^m=y^m=(b+1)a^m(b+1)^{-1}$ and it follows that $a^mb=ba^m$, a contradiction.

{\em - Subcase 1.2:} $b\in K$. 

Consider an element $x\in N\setminus K$. Since $xb\not\in K$, by Subcase 1.1, there exist some positive integers $r, s$ such that 
$a^rxb=xba^r$ and $a^sx=xa^s.$
From these equalities it follows that
$a^{rs}=(xb)^{-1}a^{rs}(xb)=b^{-1}(x^{-1}a^{rs}x)b=b^{-1}a^{rs}b,$
and consequently, $a^{rs}b=ba^{rs}.$

$2^0)$ {\em Case 2:}  $a\not\in K$.

Since $N$ is radical over $K$, there exists some positive integer $m$ such that $a^m\in K$.  By Case 1, there exists some positive integer $n$ such that $a^{mn}b=ba^{mn}$.
\end{proof}

\section{Some skew linear groups} 

Let $D$ be a division ring with center $F$. In the following we identify $F^*$ with $F^*I:=\{\alpha I\vert~ \alpha\in F^*\}$, where $I$ denotes the identity matrix in $GL_n(D)$. 

In [7, Theorem 1], it was proved that if $D$ is centrally finite, then any finitely generated subnormal subgroup of $D^*$ is central. This result can be carried over for weakly locally finite division rings as the following.

\begin{thm}\label{thm:4.1} Let $D$ be a weakly locally finite division ring. Then, every finitely generated subnormal subgroup of $D^*$ is central.
\end{thm}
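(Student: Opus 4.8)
The plan is to reduce the statement to the already-established Theorem~\ref{thm:3.3} by exploiting weak local finiteness. Let $N$ be a finitely generated subnormal subgroup of $D^*$, say $N=\langle a_1,\ldots,a_k\rangle$. The key idea is that every element of $N$ is a word in the finitely many generators $a_i$ and their inverses; hence $N$ is contained in the multiplicative group $L^*$ of the division subring $L$ of $D$ generated by the finite set $S=\{a_1,\ldots,a_k\}$. Since $D$ is weakly locally finite, $L$ is centrally finite. Moreover $N$ is subnormal in $L^*$ as well: intersecting a subnormal series from $N$ up to $D^*$ with $L^*$ yields a subnormal series from $N$ (note $N\cap L^*=N$) up to $L^*$.

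Next I would invoke the centrally finite case, namely [7, Theorem~1] cited in the paragraph preceding the statement: a finitely generated subnormal subgroup of the multiplicative group of a centrally finite division ring is central. Applied to $N\le L^*$ with $L$ centrally finite, this gives $N\subseteq Z(L)$. So every element of $N$ commutes with every element of $S$, and therefore with every element of the division subring they generate, i.e.\ with every element of $L$; in particular $N$ is an abelian group. But then $N$ is certainly radical over the center $F$ of $D$ in a trivial sense? Not quite---$Z(L)$ need not sit inside $F$. To bridge this gap I would instead observe that $N\subseteq Z(L)$ makes $N$ an abelian normal--in--itself situation, and apply Lemma~\ref{lem:3.4} or Theorem~\ref{thm:3.3} directly: since $N$ is abelian and subnormal in $D^*$, the hypothesis ``for all $x,y\in N$, $x^{n_{xy}}y=yx^{n_{xy}}$'' of Lemma~\ref{lem:3.4} holds with $n_{xy}=1$, so Lemma~\ref{lem:3.4} yields $N\subseteq F$.

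I expect the main obstacle to be the bookkeeping around subnormality under intersection with $L^*$: one must check carefully that each step $N_i\vartriangleleft N_{i+1}$ of a subnormal series in $D^*$ restricts to $N_i\cap L^*\vartriangleleft N_{i+1}\cap L^*$, and that the bottom term is exactly $N$ (which holds because $N\subseteq L^*$). A secondary subtlety is the passage from ``$N$ central in $L^*$'' back to ``$N$ central in $D^*$''; as noted, $Z(L)$ is generally larger than $F\cap L$, so one cannot simply conclude $N\subseteq F$ from the centrally finite result alone---the extra input of Lemma~\ref{lem:3.4} (itself resting on Theorem~\ref{thm:3.3}) is what closes the argument. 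Everything else is routine: finite generation exactly matches the ``finite subset $S$'' in the definition of weak local finiteness, so the reduction to the centrally finite case is immediate once the subnormality transfers.
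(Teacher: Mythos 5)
Your proof is correct and takes essentially the same route as the paper's: generate a division subring $L$ from the finitely many generators, use weak local finiteness to get $L$ centrally finite, apply [7, Theorem 1] to conclude $N\subseteq Z(L)$, hence $N$ abelian, and then use subnormality in $D^*$ to push $N$ into $F$. The only (harmless) difference is at the last step: the paper cites [9, 14.4.4, p.~440] directly for the fact that an abelian (solvable) subnormal subgroup of $D^*$ is central, whereas you invoke Lemma~\ref{lem:3.4} with $n_{xy}=1$, which is a valid but slightly heavier route to the same conclusion.
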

\begin{proof}  Since $N$ is finitely generated and $D$ is weakly locally finite, the division subring generated by $N$, namely $L$, is centrally finite.  By [7, Theorem 1], $N\subseteq Z(L)$. Consequently, $N$ is abelian. Now, by [9, 14.4.4, p. 440], $N\subseteq Z(D)$.
\end{proof}

The following theorem is a generalization of  Theorem 5 in \cite{akb3}.
\begin{thm}\label{thm:4.2}
Let $D$ be a weakly locally finite division ring with center $F$ and $N$ be a infinite  subnormal subgroup of $GL_n(D), n\geq 2$. If $N$ is finitely generated, then  $N\subseteq F$.
\end{thm}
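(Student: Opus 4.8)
The plan is to run the same reduction scheme used in the proofs of Theorems~\ref{thm:4.1} and \ref{thm:3.3}: compress $N$ into a matrix ring over a \emph{centrally finite} division subring, apply the centrally finite case, Theorem~5 of \cite{akb3}, and then globalize.

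First I would fix a finite generating set $A_1,\dots,A_k$ of $N$ and let $L$ be the division subring of $D$ generated by all entries of the matrices $A_1, A_1^{-1}, \dots, A_k, A_k^{-1}$. Since $D$ is weakly locally finite, $L$ is centrally finite. A matrix with entries in $L$ that is invertible over $D$ is already invertible over $L$ (invert it by row reduction inside the division ring $L$), so each $A_i^{-1}$ again has all entries in $L$, and hence $N\subseteq GL_n(L)$. Note that $D$ must be infinite: a finite division ring is a field by Wedderburn's theorem, so $GL_n(D)$ would be finite, contradicting that $N$ is infinite; consequently $GL_n(L)\supseteq N$ is infinite as well.

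Next I would transport subnormality to $GL_n(L)$. Writing a subnormal series $N=N_1\vartriangleleft N_2\vartriangleleft\cdots\vartriangleleft N_r=GL_n(D)$ and putting $M_i=N_i\cap GL_n(L)$, we get $M_1=N$ (since $N\subseteq GL_n(L)$), $M_r=GL_n(L)$, and $M_1\vartriangleleft M_2\vartriangleleft\cdots\vartriangleleft M_r$. Thus $N$ is an infinite, finitely generated, subnormal subgroup of $GL_n(L)$ with $L$ centrally finite and $n\ge 2$, so Theorem~5 of \cite{akb3} applies and yields $N\subseteq Z(GL_n(L))=\{\lambda I : \lambda\in Z(L)^{*}\}$. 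In particular $N$ is abelian.

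It remains to globalize. Now $N$ is an abelian, hence soluble, subnormal subgroup of $GL_n(D)$, and the analogue for $GL_n(D)$ of the statement ``[9, 14.4.4, p.~440]'' quoted in this paper for $D^{*}$ --- namely that a soluble subnormal subgroup of $GL_n(D)$ lies in the center of $GL_n(D)$ --- gives $N\subseteq Z(GL_n(D))=F^{*}I$, i.e.\ $N\subseteq F$. The transfer of the subnormal series and the stability of inverses over $L$ are routine; the step I expect to be the real obstacle is this last one. Unlike Theorems~\ref{thm:4.1} and \ref{thm:3.3}, which land inside $D^{*}$ and can be finished with [9, 14.4.4], here one needs the matrix version of that circle of results, and one should check that the precise form invoked is valid for $n\ge 2$ under the hypotheses available here (they are, since $D$ is infinite; some such hypothesis is genuinely needed, as $GL_2(\mathbb{F}_2)\cong S_3$ shows). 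A citation-free alternative would be to argue directly from $N\subseteq Z(L)^{*}I$: conjugating a putative non-central scalar $\lambda I\in N$ by a transvection $I+aE_{12}$ with $a\lambda\ne\lambda a$ produces a non-scalar matrix, and one would then derive a contradiction by climbing the series $N=N_1\vartriangleleft\cdots\vartriangleleft N_r$ --- but making that bookkeeping precise for a subnormal subgroup of defect $r$ is exactly the hard part.
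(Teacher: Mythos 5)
Your reduction is the same as the paper's up to a point: generate $L$ by the entries of the generators, note $L$ is centrally finite and $N\subseteq GL_n(L)$, and aim at Theorem~5 of \cite{akb3}. The trouble starts where you diverge. That theorem is, as far as this paper uses it, a statement about \emph{normal} subgroups of $GL_n$ over a centrally finite division ring (the paper's own proof is careful to establish normality before invoking it, and the title of \cite{akb3} points the same way); you apply it to a subgroup of $GL_n(L)$ that your intersection argument only shows to be \emph{subnormal}, so the citation does not cover your situation. A second, related hole is your closing globalization: you assert, without proof or a precise reference, that soluble subnormal subgroups of $GL_n(D)$, $n\ge 2$, are central for infinite $D$, and you yourself concede that your ``citation-free alternative'' is a sketch whose bookkeeping is exactly the hard part. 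As written, then, the argument has a gap at the application of \cite{akb3} and another at the end.

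Both gaps are closed by the one ingredient the paper uses and you never invoke: by \cite{mah2} (Theorem 11 there), a noncentral subnormal subgroup of $GL_n(D)$, $n\ge 2$, contains $SL_n(D)$. Argue by contradiction and assume $N\not\subseteq F$. Then $SL_n(D)\subseteq N$, so $N$ is actually \emph{normal} in $GL_n(D)$, hence normal in $GL_n(L)$ since it is contained there, and Theorem~5 of \cite{akb3} now applies legitimately to give $N\subseteq Z(GL_n(L))$. But then $N$ is abelian while containing the nonabelian group $SL_n(D)$, a contradiction. Notice that this route also makes your final paragraph unnecessary: no matrix analogue of [9, 14.4.4] is needed, because the contradiction is reached inside the reduction itself rather than by transporting $N\subseteq Z(L)^{*}I$ back up to $Z(GL_n(D))$.
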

\begin{proof}
Suppose that $N$ is non-central. Then, by  [8, Theorem 11], $SL_n(D)\subseteq N$. So, $N$ is normal in  $GL_n(D)$. Suppose that $N$ is generated by matrices $A_1, A_2, ..., A_k$ in $GL_n(D)$ and $T$ is the set of all coefficients of all $A_j$. 
Since $D$ is weakly locally finite, the division subring $L$ generated by $T$ is centrally finite.  It follows that $N$ is a normal finitely generated subgroup of $GL_n(L)$. By [1, Theorem 5], $N\subseteq Z(GL_n(L))$.
In particular,  $N$ is abelian and consequently, $SL_n(D)$ is abelian, a contradiction. 
\end{proof}

\begin{lem}\label{lem:4.3}
Let $D$ be a division ring and $n\geq 1$. Then, $Z(SL_n(D))$ is a torsion group if and only if $Z(D')$ is  a torsion group.
\end{lem}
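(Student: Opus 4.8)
The plan is to reduce the statement to an explicit description of $Z(SL_n(D))$ and then combine it with the identity $Z(D')=D'\cap F$ from Lemma~\ref{lem:3.1}, where $F=Z(D)$; I note in passing that weak local finiteness plays no role here, the claim being valid for an arbitrary division ring $D$. The case $n=1$ costs nothing: the Dieudonn\'e determinant on $GL_1(D)=D^*$ is the canonical epimorphism $D^*\to D^*/D'$, so $SL_1(D)=D'$ and $Z(SL_1(D))=Z(D')$. Hence I would fix $n\geq 2$ from now on.

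The main step is to prove that, for $n\geq 2$,
$$Z\big(SL_n(D)\big)=\{\lambda I : \lambda\in F^*,\ \lambda^{n}\in D'\}.$$
The inclusion $\supseteq$ is easy: a scalar matrix $\lambda I$ with $\lambda\in F$ is central in $GL_n(D)$, while the Dieudonn\'e determinant of $\lambda I$ equals $\lambda^{n}$ modulo $D'$, so $\lambda I\in SL_n(D)$ precisely when $\lambda^{n}\in D'$. For the reverse inclusion, every transvection $I+aE_{ij}$ (with $i\neq j$, $a\in D$, and $E_{ij}$ the matrix unit) lies in $SL_n(D)$, so any $A\in Z(SL_n(D))$ commutes with all of them; comparing entries in $A(I+E_{ij})=(I+E_{ij})A$ forces $A=\lambda I$ to be scalar, comparing entries in $A(I+aE_{ij})=(I+aE_{ij})A$ then forces $\lambda a=a\lambda$ for all $a\in D$, i.e. $\lambda\in F$, and finally $A\in SL_n(D)$ gives $\lambda^{n}\in D'$.

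Granting this description, the two implications are immediate once we invoke Lemma~\ref{lem:3.1} for the normal (hence subnormal) subgroup $D'$ of $D^*$, which gives $Z(D')=D'\cap F$. If $Z(D')$ is torsion and $\lambda I\in Z(SL_n(D))$, then $\lambda\in F^*$ and $\lambda^{n}\in D'\cap F=Z(D')$ is torsion, so $\lambda$, and therefore $\lambda I$, is torsion; thus $Z(SL_n(D))$ is torsion. Conversely, if $Z(SL_n(D))$ is torsion and $z\in Z(D')=D'\cap F$, then $z\in F^*$ and $z^{n}\in D'$ (because $z\in D'$), so $zI\in Z(SL_n(D))$ by the displayed formula and is therefore torsion, whence $z$ is torsion; thus $Z(D')$ is torsion.

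The only delicate point is the middle step: one needs the value $\lambda^{n}\bmod D'$ of the Dieudonn\'e determinant on a scalar matrix, together with the fact that the transvections $I+aE_{ij}$ already force a central element of $SL_n(D)$ to be a scalar lying in $F$. Both are standard facts about $GL_n$ over a division ring --- and for $n\geq 2$ there is always an admissible pair $i\neq j$ --- so I do not expect a genuine obstacle; the remaining work is essentially bookkeeping.
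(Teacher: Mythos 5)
Your proof is correct and takes essentially the same route as the paper: both reduce the lemma to the description $Z(SL_n(D))=\{\lambda I : \lambda\in F^*,\ \lambda^n\in D'\}$ for $n\geq 2$, combined with $Z(D')=D'\cap F$, and handle $n=1$ via $SL_1(D)=D'$. The only difference is that you verify the description of $Z(SL_n(D))$ directly using transvections and the Dieudonn\'e determinant, whereas the paper simply cites it from Draxl's book [2, \S 21, Theorem 1].
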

\begin{proof}
The case $n=1$ is clear. So, we can assume that $n\geq 2$. Denote by $F$ the center of $D$. By [2, \S21, Theorem 1, p.140], 
$$Z(SL_n(D))=\big\{ dI \vert d\in F^* \text{ and } d^n\in D'\big\}.$$ 
If $Z(SL_n(D))$ is a torsion group, then, for any $d\in Z(D')=D'\cap F$, $dI\in Z(SL_n(D))$. It follows that $d$ is torsion.
Conversely, if $Z(D')$ is a torsion group, then, for any $A\in Z(SL_n(D))$, $A =dI$ for some $d\in F^*$ such that $d^n\in D'$. It follows that $d^n$ is torsion. Therefore, $A$ is torsion.
\end{proof}

\begin{thm}\label{thm:4.4}
 Let $D$ be a non-commutative algebraic, weakly locally finite division ring with center $F$ and  $N$ be a subgroup of $GL_n(D)$ containing $F^*, n\geq 1$.  Then $N$ is not finitely generated.
\end{thm}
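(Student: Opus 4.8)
The plan is to argue by contradiction: suppose $N$ is finitely generated, say $N=\langle A_1,\dots,A_k\rangle$ with $A_j\in GL_n(D)$, and force $F$ to be a finite field, which together with algebraicity of $D$ over $F$ will make $D$ commutative. First I would let $R_0$ be the subring of $D$ generated by the finitely many entries of $A_1,\dots,A_k$ and of $A_1^{-1},\dots,A_k^{-1}$. Since every element of $N$ is a word in the $A_j^{\pm 1}$, we have $N\subseteq GL_n(R_0)$, and $R_0$ is finitely generated as a ring. Because $F^*=F^*I\subseteq N\subseteq GL_n(R_0)$, reading off a diagonal entry of $\alpha I$ shows that $\alpha\in R_0$ for every $\alpha\in F^*$, so $F\subseteq R_0$.

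Next I would observe that $R_0$ is in fact a division ring. Given $0\ne c\in R_0$, algebraicity of $D$ over $F$ yields a relation $c^m+\beta_{m-1}c^{m-1}+\cdots+\beta_1 c+\beta_0=0$ with $\beta_i\in F$ and, after cancelling powers of $c$, $\beta_0\ne 0$; hence $c^{-1}=-\beta_0^{-1}(c^{m-1}+\beta_{m-1}c^{m-2}+\cdots+\beta_1)$ already lies in $R_0$, using $F\subseteq R_0$. Thus $R_0$ is a division subring of $D$ which is finitely generated as a ring, and, $D$ being weakly locally finite, $R_0$ is centrally finite, say $[R_0:Z(R_0)]<\infty$. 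Now the Artin--Tate lemma applies with $\mathbb{Z}\subseteq Z(R_0)\subseteq R_0$: a finitely generated $\mathbb{Z}$-algebra that is finitely generated as a module over its centre has finitely generated centre, so $Z(R_0)$ is a finitely generated $\mathbb{Z}$-algebra. But $Z(R_0)$ is a field (the centre of a division ring), and a field which is finitely generated as a ring is finite; hence $Z(R_0)$, and therefore $F$ (which is central in $R_0$, so $F\subseteq Z(R_0)$), is a finite field.

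Finally I would conclude: $D$ is algebraic over the finite field $F$, so each $a\in D$ lies in the finite field $F(a)$ and satisfies $a^{|F(a)|}=a$; by Jacobson's commutativity theorem $D$ is commutative, contradicting the hypothesis. As an alternative to the Artin--Tate step one may instead embed $GL_n(R_0)\hookrightarrow GL_m\big(Z(R_0)\big)$, $m=n[R_0:Z(R_0)]$, via the left regular representation of $R_0$ over $Z(R_0)$ --- under which $F^*I$ still lands in scalar matrices --- then pass to the finitely generated $\mathbb{Z}$-subalgebra $R$ of $Z(R_0)$ generated by the entries of a finite generating set of $N$ and of its inverses, note $F\subseteq R$, and apply the Nullstellensatz over $\mathbb{Z}$ to the residue field $R/\mathfrak m$ at a maximal ideal to get $F$ finite. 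I expect the main obstacle to be the second paragraph: showing $R_0$ is a division ring is exactly where finite generation and algebraicity over $F$ must be made to cooperate, and one must be careful to invoke correctly the two classical inputs --- the Artin--Tate lemma and the fact that a field finitely generated as a ring is finite.
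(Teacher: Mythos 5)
Your proof is correct, but it takes a genuinely different route from the paper's. The paper never leaves the group-theoretic setting: it combines Lemma \ref{lem:3.2} and Lemma \ref{lem:4.3} to see that $Z(SL_n(D))$ is torsion, looks at the finitely generated abelian group $F^*N'/N'$, and then splits by characteristic --- in characteristic $0$ it contradicts the fact that $\Q^*$ is not finitely generated, and in characteristic $p$ it shows $F$ is algebraic over $\F_p$ (via the transcendental-element argument) before invoking Jacobson. You instead trap everything inside the subring $R_0$ generated by the finitely many entries of the generators and their inverses: the hypothesis $F^*\subseteq N$ forces $F\subseteq R_0$, algebraicity of $D$ over $F$ then makes $R_0$ a division ring (so weak local finiteness applies and $[R_0:Z(R_0)]<\infty$), and the noncommutative Artin--Tate lemma (legitimate here since $Z(R_0)$ is central in $R_0$) plus Zariski's lemma over $\mathbb{Z}$ (a field finitely generated as a ring is finite) force $F\subseteq Z(R_0)$ to be a finite field, after which Jacobson finishes uniformly. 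Your argument buys a single characteristic-free proof with the stronger intermediate conclusion that $F$ itself would have to be finite, and it bypasses Lemmas \ref{lem:3.2} and \ref{lem:4.3} and the cited results on $Z(SL_n(D))$ entirely; the price is heavier commutative-algebra input (Artin--Tate in its central-subring form and the generalized Nullstellensatz over $\mathbb{Z}$), whereas the paper's proof uses only elementary group theory, the reduced norm argument behind Lemma \ref{lem:3.2}, and Jacobson's theorem. One small point to make explicit if you write this up: $R_0$ coincides with the \emph{division} subring generated by the finitely many entries (it is a division ring containing them and is contained in any such division subring), which is exactly what licenses the appeal to weak local finiteness.
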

\begin{proof} Recall  that if a division ring $D$ is weakly locally finite, then $Z(D')$ is a torsion group (see  Lemma \ref{lem:3.2}). Therefore, by Lemma \ref{lem:4.3}, $Z(SL_n(D))$ is a torsion group.

Suppose that there is  a finitely generated subgroup $N$ of $GL_n(D)$ containing $F^*$. Clearly $N/N'$ is a finitely generated abelian group, where $N'$ denotes the derived subgroup of $N$. Then, in virtue of [9, 5.5.8, p. 113], $F^*N'/N'$ is a finitely generated abelian group.\\

\noindent
{\em Case 1: $char(D)=0$.}

 Then, $F$ contains the field $\Q$ of rational numbers and it follows that $\Q^*I/(\Q^*I\cap N')\simeq \Q^*N'/N'$. Since   $F^*N'/N'$ is finitely generated abelian subgroup, $\Q^*N'/N'$ is finitely generated too, and consequently   $\Q^*I/(\Q^*I\cap N')$ is finitely generated. Consider an arbitrary $A\in \Q^*I\cap N'$. Then $A\in F^*I\cap SL_n(D)\subseteq Z(SL_n(D))$. 
Therefore $A$ is torsion.
Since $A\in \Q^*I$, we have $A=dI$ for some $d\in \Q^*$. It follows that $d=\pm{1}$. Thus, $\Q^*I\cap N'$ is finite. Since $\Q^*I/(\Q^*I\cap N')$ is finitely generated, $\Q^*I$ is finitely generated. Therefore $\Q^*$ is finitely generated, that  is impossible.\\

\noindent
{\em Case 2: $char(D)=p > 0$.}

 Denote by $\F_p$ the prime subfield of $F$, we shall prove that $F$ is algebraic over $\F_p$. In fact, suppose that $u\in F$ and $u$  is transcendental over $\F_p$. Put $K:=\F_p(u)$, then the group $K^*I/(K^*I\cap N')$ considered as a subgroup of $F^*N'/N'$ is finitely generated. Considering an arbitrary $A\in K^*I\cap N'$, we have $A=(f(u)/g(u))I$ for some $f(X), g(X)\in \F_p[X], ((f(X), g(X))=1$ and $g(u)\neq 0$. As mentioned above, we have $f(u)^s/g(u)^s=1$ for some positive integer $s$. Since $u$ is transcendental over $\F_p$,  $f(u)/g(u)\in \F_p$. Therefore, $K^*I\cap N'$ is finite and consequently, $K^*I$ is finitely generated. It follows that $K^*$ is finitely generated, hence $K$ is finite. Hence $F$ is algebraic over $\F_p$ and it follows that $D$ is algebraic over $\F_p$. Now, in virtue of Jacobson's Theorem  [6, (13.11), p. 219],  $D$ is commutative, a contradiction. 
\end{proof}
 
\begin{cor}\label{cor:4.5}
Let $D$ be an algebraic, weakly locally finite division ring. If the group $GL_n(D), n\geq 1$, is finitely generated, then $D$ is commutative.
\end{cor}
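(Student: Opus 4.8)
The plan is to deduce Corollary~\ref{cor:4.5} directly from Theorem~\ref{thm:4.4}. Suppose, for contradiction, that $D$ is an algebraic, weakly locally finite division ring which is \emph{not} commutative, yet $GL_n(D)$ is finitely generated for some $n\geq 1$. Let $F$ be the center of $D$. The obvious candidate subgroup to feed into Theorem~\ref{thm:4.4} is $N=GL_n(D)$ itself: it certainly contains $F^*$ (identified with $F^*I$ as in the convention fixed at the start of Section~4), so all the hypotheses of Theorem~\ref{thm:4.4} are in place --- $D$ is non-commutative, algebraic, and weakly locally finite, and $N\supseteq F^*$. Theorem~\ref{thm:4.4} then asserts that $N=GL_n(D)$ is not finitely generated, contradicting our assumption. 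Hence $D$ must be commutative.

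So the corollary is essentially a one-line specialization, and there is no real obstacle: the work has already been done in Theorem~\ref{thm:4.4}. The only point to be careful about is the degenerate reading of the statement --- one should note that the conclusion ``$D$ is commutative'' is exactly the negation of the standing assumption in Theorem~\ref{thm:4.4} that $D$ be non-commutative, so the logical structure is a clean contrapositive: ``$D$ non-commutative algebraic weakly locally finite $\Rightarrow$ $GL_n(D)$ not finitely generated'' is equivalent to ``$GL_n(D)$ finitely generated and $D$ algebraic weakly locally finite $\Rightarrow$ $D$ commutative.'' I would write the proof in two sentences: apply Theorem~\ref{thm:4.4} with $N=GL_n(D)$, observe the contradiction with finite generation, and conclude.

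If one wanted to be marginally more self-contained, one could instead take $N$ to be any finitely generated subgroup of $GL_n(D)$ that contains $F^*$ --- for instance the subgroup generated by a finite generating set of $GL_n(D)$ together with nothing extra, since that set already generates everything including $F^*$ --- but this adds no content. I expect the author's proof to be the same immediate application of Theorem~\ref{thm:4.4}.
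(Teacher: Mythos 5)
Your proposal is correct and matches the paper's intended argument: the corollary is stated as an immediate consequence of Theorem~\ref{thm:4.4}, obtained by taking $N=GL_n(D)\supseteq F^*I$ and reading the theorem in contrapositive form. Nothing is missing.
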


If $M$ is a maximal  finitely generated subgroup of $GL_n(D)$, then $GL_n(D)$ is finitely generated. So, the next result follows  immediately from  Corollary \ref{cor:4.5}.

\begin{cor}\label{cor:4.6}
Let $D$ be an algebraic, weakly locally finite division ring. If the group $GL_n(D), n\geq 1$,  has a maximal  finitely generated subgroup, then $D$ is commutative.
\end{cor}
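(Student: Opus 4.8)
The plan is to deduce this from Corollary~\ref{cor:4.5} by showing that any maximal finitely generated subgroup of $GL_n(D)$ is in fact the whole group $GL_n(D)$, so that the hypothesis forces $GL_n(D)$ itself to be finitely generated.

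First I would let $M$ be a maximal finitely generated subgroup of $GL_n(D)$ and fix a finite generating set $X$ of $M$. Then I would prove $M = GL_n(D)$ by contradiction: if some $g \in GL_n(D)$ lay outside $M$, the subgroup $\langle X \cup \{g\}\rangle$ would still be finitely generated (a group generated by a finite set together with one extra element is finitely generated) and would properly contain $M$, contradicting the maximality of $M$ within the family of finitely generated subgroups of $GL_n(D)$. Hence $M = GL_n(D)$, and in particular $GL_n(D)$ is finitely generated.

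Finally I would invoke Corollary~\ref{cor:4.5}: since $D$ is algebraic and weakly locally finite and $GL_n(D)$ is finitely generated, $D$ is commutative, which is the desired conclusion.

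As for obstacles, there is essentially none: all of the substance is already contained in Theorem~\ref{thm:4.4} and its consequence Corollary~\ref{cor:4.5} (which themselves split on $\mathrm{char}(D)=0$ versus $\mathrm{char}(D)=p>0$ and ultimately use that $Z(SL_n(D))$ is torsion together with Jacobson's commutativity theorem). The only points one must not overlook are the correct reading of ``maximal finitely generated subgroup'' (maximal with respect to inclusion among the finitely generated subgroups, not a maximal subgroup that happens to be finitely generated) and the elementary fact that adjoining a single element to a finitely generated group yields again a finitely generated group.
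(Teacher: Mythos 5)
Your proposal is correct and matches the paper's argument: the paper also notes that a maximal finitely generated subgroup must be all of $GL_n(D)$ (otherwise adjoining an outside element gives a larger finitely generated subgroup), so $GL_n(D)$ is finitely generated and Corollary~\ref{cor:4.5} applies. No issues.
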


By the same way as in the proof of Theorem \ref{thm:4.4}, we obtain the following corollary.

\begin{cor}\label{cor:4.7}
Let $D$ be a non-commutative  algebraic, weakly locally finite division ring with center $F$ and $S$ is a  subgroup of $GL_n(D)$. If $N=F^*S$, then $N/N'$ is not finitely generated.
\end{cor}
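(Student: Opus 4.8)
The plan is to run exactly the argument used for Theorem~\ref{thm:4.4}, the only difference being that here the finite generation of $N/N'$ is handed to us directly rather than deduced from the finite generation of $N$ itself. First I would note that, since $S$ is a subgroup of $GL_n(D)$, it contains the identity matrix, so $F^* = F^*I \subseteq F^*S = N$; hence $N$ is a subgroup of $GL_n(D)$ containing $F^*$. Also, $D$ being weakly locally finite gives, via Lemma~\ref{lem:3.2} together with Lemma~\ref{lem:4.3}, that $Z(SL_n(D))$ is a torsion group.

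Now suppose, for contradiction, that $N/N'$ is finitely generated. It is an abelian group, so by [9, 5.5.8, p. 113] its subgroup $F^*N'/N'$ is finitely generated too, and from this point the proof of Theorem~\ref{thm:4.4} applies verbatim. In characteristic $0$ one has $\mathbb{Q}^* I \subseteq F^* I$, the isomorphism $\mathbb{Q}^*I/(\mathbb{Q}^*I\cap N') \simeq \mathbb{Q}^*N'/N'$ shows that $\mathbb{Q}^*I/(\mathbb{Q}^*I\cap N')$ is finitely generated, and every element of $\mathbb{Q}^*I\cap N' \subseteq F^*I\cap SL_n(D) \subseteq Z(SL_n(D))$ is torsion, hence equal to $\pm I$; so $\mathbb{Q}^*I\cap N'$ is finite, whence $\mathbb{Q}^*I$, and therefore $\mathbb{Q}^*$, would be finitely generated, which is impossible. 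In characteristic $p>0$ the same computation applied to $K=\mathbb{F}_p(u)$, for a hypothetical $u\in F$ transcendental over $\mathbb{F}_p$, forces $K^*$ to be finitely generated, hence $K$ finite, a contradiction; so $F$ is algebraic over $\mathbb{F}_p$, and since $D$ is algebraic over $F$ it is algebraic over $\mathbb{F}_p$, so Jacobson's Theorem [6, (13.11), p. 219] makes $D$ commutative, contrary to hypothesis.

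I do not expect a genuine obstacle here, since all the substantive work is already contained in Theorem~\ref{thm:4.4}. The one point deserving a word of care is that finite generation of $N/N'$ alone (rather than of $N$) suffices, because the argument uses only the subgroup $F^*N'/N'$ of $N/N'$; everything else — including the invocation of the weak local finiteness of $D$ and of its algebraicity — is identical to the earlier proof, so it is enough to check that each scalar intersection used ($\mathbb{Q}^*I\cap N'$ in characteristic $0$, $K^*I\cap N'$ in characteristic $p$) still lands inside $Z(SL_n(D))$, which it does since such scalar matrices commute with all of $GL_n(D)$.
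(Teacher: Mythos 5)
Your proposal is correct and follows essentially the same route as the paper: reduce to the finite generation of the subgroup $F^*N'/N'$ of the finitely generated abelian group $N/N'$ (the paper phrases this via $N'=S'$ and $F^*S'/S'$, which is the same subgroup), then rerun the characteristic $0$ and characteristic $p$ arguments of Theorem \ref{thm:4.4} to contradict non-commutativity. Your observation that only $F^*N'/N'$, not the finite generation of $N$ itself, is needed is exactly the point the paper exploits.
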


\begin{proof}
Suppose that $N/N'$ is finitely generated. Since $N'=S'$ and $ F^*I/(F^*I\cap S') \simeq F^*S'/S'$, $F^*I/(F^*I\cap S')$ is a finitely generated abelian group.  Now, by the same arguments as in the proof of Theorem \ref{thm:4.4}, we conclude that $D$ is commutative.
\end{proof}

\begin{cor}\label{cor:4.8}
Let $D$ be a non-commutative algebraic, weakly locally finite division ring. Then, $D^*$ is not finitely generated.
\end{cor}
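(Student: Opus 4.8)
The plan is to obtain this as an immediate specialization of Theorem~\ref{thm:4.4} to the case $n=1$. Recall that $GL_1(D)$ is, by definition, the multiplicative group $D^*$, and that under the identification of $F^*$ with $F^*I$ the center $F^*$ is a subgroup of $D^*$. Thus, taking $N=D^*$ and $n=1$, the group $N$ is a subgroup of $GL_n(D)$ containing $F^*$, while $D$ is non-commutative, algebraic, and weakly locally finite by hypothesis; so all the hypotheses of Theorem~\ref{thm:4.4} are satisfied, and its conclusion says precisely that $D^*$ is not finitely generated. Alternatively, one may invoke Corollary~\ref{cor:4.7} with $S=D^*$: then $N=F^*S=D^*$ and $N/N'=D^*/D'$ is not finitely generated, so $D^*$ cannot be finitely generated either, since every quotient of a finitely generated group is finitely generated.

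If a self-contained argument is preferred, I would argue by contradiction following the pattern of the proof of Theorem~\ref{thm:4.4}. Assume $D^*$ is finitely generated. Then the abelian group $D^*/D'$ is finitely generated, hence so is its subgroup $F^*D'/D'\cong F^*/(F^*\cap D')=F^*/Z(D')$, where the last equality is Lemma~\ref{lem:3.1}. By Lemma~\ref{lem:3.2}, $Z(D')$ is a torsion group. In characteristic $0$ one restricts attention to $\Q^*\subseteq F^*$: the group $\Q^*I/(\Q^*I\cap D')$ is finitely generated, and $\Q^*I\cap D'\subseteq F^*I\cap D'=Z(D')$ is torsion, hence consists only of $\pm I$; therefore $\Q^*$ is finitely generated, which is false. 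In characteristic $p>0$ the same device applied to $\F_p(u)^*$ for a hypothetical transcendental $u\in F$ forces $\F_p(u)$ to be finite, a contradiction; hence $F$, and then $D$, is algebraic over $\F_p$, and Jacobson's theorem yields that $D$ is commutative, contrary to hypothesis.

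I do not expect any real obstacle here: the entire substance of the statement is carried by Theorem~\ref{thm:4.4} (equivalently Corollary~\ref{cor:4.7}) together with Lemma~\ref{lem:3.2}, and the only minor point to keep in mind in the self-contained version is that subgroups of finitely generated abelian groups are finitely generated, which is what legitimizes passing from ``$D^*$ finitely generated'' to ``$F^*/Z(D')$ finitely generated''.
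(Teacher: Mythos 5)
Your proposal is correct and takes essentially the same route as the paper: the paper deduces the corollary from Corollary~\ref{cor:4.7} with $N=S=GL_n(D)$, using $GL_n(D)/SL_n(D)\simeq D^*/D'$, while you specialize to $n=1$ (either via Theorem~\ref{thm:4.4} with $N=D^*$, or via Corollary~\ref{cor:4.7} with $S=D^*$), which is the same machinery and even sidesteps the Dieudonn\'e-determinant identification that the paper states somewhat loosely as $D^*\simeq GL_n(D)/SL_n(D)$. Your self-contained variant is just an unwinding of the proof of Theorem~\ref{thm:4.4} and is likewise sound.
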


\begin{proof} Take $N=S=GL_n(D)$ in Corollary \ref{cor:4.7} and have in mind that $[GL_n(D), GL_n(D)]=SL_n(D)$, we have $D^*\simeq GL_n(D)/SL_n(D)$ is not finitely generated.
\end{proof}

\noindent
{\bf Acknowledgments}

The authors express their sincere thanks to the editor and the referee for suggestions and important remarks.
The authors are also extremely grateful for the support given by Vietnam's National Foundation for Science and Technology Development (NAFOSTED)
under  grant number 101.01-2011.16

\end{document}